\newtheorem{thm}{Theorem}[section]
\newtheorem{lemma}[thm]{Lemma}
\theoremstyle{definition}
\theoremstyle{remark}
\title{Aubert duals of strongly positive representations for metaplectic groups}
\author{
Yeansu Kim\thanks{
Department of Mathematics Education, Chonnam National University, 
Gwangju 61186, Republic of Korea.
Email: ykim@jnu.ac.kr
}
\and
Gyujin Oh\thanks{
Department of Mathematics, Chonnam National University, 
Gwangju 61186, Republic of Korea.
Email: lake2314@naver.com
}
}
\begin{document}

\maketitle
\setcounter{footnote}{0}
\footnotetext{2020 Mathematics Subject Classification. 22E35, 22E50, 11F70}


\begin{abstract}
We determine the Aubert duals of strongly positive representations of the metaplectic group 
\(\widetilde{Sp}(n)\) over a non-Archimedean local field $F$ of characteristic different from two. Using the classification of Mati\'c and an explicit analysis of Jacquet modules, we describe these duals in terms of precise inducing data. Our results extend known descriptions for classical groups to the metaplectic groups case and clarify the role of Aubert duality for non-linear covering groups, providing a foundation for future applications to the study of unitary representations for those cases. Furthermore, We are able to show that the same method applies to odd general spin groups $GSpin(2n+1)$, yielding an explicit description of Aubert duals in that setting as well.
\end{abstract}

\section{Introduction}
The study of classifying unitary representations of connected reductive groups defined over a $p$-adic field $F$ is one important subject in the Langlands program, which is still widely open. Even constructing unitary non-tempeted representations is not much known. Among the tools, we expect that the Aubert involution provides a tool for constructing unitary representations under a conjecture that the Aubert involution preserves unitarity. Very breifly, the Aubert duals describe the duality structure within the Grothendieck group of admissible representations \cite{A95, A96}. While description of the Aubert duals have been extensively developed for connected reductive groups (See \cite{AM23, M17, M19}), its analogue for non-linear covering groups such as the metaplectic double cover of the symplectic groups remains less understood.

The main goal of this paper is to explicitly determine the Aubert duals of strongly positive representations of \(\widetilde{Sp}(n)\), where \(\widetilde{Sp}(n)\) is the unique non-trivial two-fold central extension of $F$-rational points of a symplectic group $Sp(n):=\mathrm{Sp}(n,F)$ and $F$ is a non-Archimedean local field of characteristic different from two. 
In \cite{M11}, Matic constructed a complete classification of strongly positive representations of \(\widetilde{Sp}(n)\), showing that they can be realized uniquely as irreducible subrepresentations of certain induced representations of the form (\ref{spdsind}). (See also the appendix of \cite{K15}.)
A natural next step is to determine the Aubert duals of those strongly positive representation. We show that their Aubert duals can be described in terms of explicit data of those induced representations.

Note that we expect possible further strong application on the construction of unitary representations. Very briefly, this project is motivated by analogous results for classical groups such as symplectic groups and special orthogonal groups, where in this case Aubert duality has been shown to preserve unitarity for strongly positive representations and, therefore, it play an essential role in describing unitary representations for those cases \cite{H09, M17}. However, for the metaplectic case, new difficulties arise: the non-linearity of the group complicates the structure of Jacquet modules, and standard techniques must be carefully adapted to the covering setting; we leave this application for future work. Our work should be viewed as the metaplectic counterpart of Matić’s explicit description of Aubert duals for classical groups \cite{M17}. Although we follow \cite{M17} closely, this result does not seem to be explicitly written in the literature. Therefore, we record it here as we believe that it provides a key structural ingredient in the development of the unitary duals for metaplectic groups.

We now describe the content of the paper. In Section \ref{Notation and prelim}, we introduce notations and preliminaries such as properties of Aubert duals. In Section \ref{several lemmas}, we describe several lemmas that are needed to describe the Aubert duals of certain representations. Finally, in Section \ref{main}, we describe explicitly the Aubert duals of strongly positive representations of $\widetilde{Sp}(n)$. We also consider the case of odd general spin groups in Section \ref{GSpin_Aubertduals} and able to show that the structure is similar to the case of metaplectic groups.

\section{Notations and preliminaries}\label{Notation and prelim}


Let \(\mathrm{Sp}(n)\) be the symplectic group of rank $n$ defined over a non-Archimedean local field $F$ of characteristic different from two and let $Sp(n)$ be its $F$-rational points. Let \(\widetilde{Sp}(n)\) be the metaplectic group of rank \(n\), the unique non-trivial two-fold central extension of the symplectic group \(Sp(n)\). In other words, the following holds: 
\[
1 \rightarrow \mu_2 \rightarrow \widetilde{Sp}(n) \rightarrow Sp(n) \rightarrow 1,
\] 
where \(\mu_2 = \{1,-1\}\). The multiplication in \(\widetilde{Sp}(n)\) (which is as a set given by \(Sp(n) \times \mu_2\)) is given by Rao's cocyle. Let \(\widetilde{GL}(n)\) be a double cover of $F$-rational points of a general linear group $GL(n):=\mathrm{GL}(n, F)$, where the multiplication is given by 
\[
(g_1, \epsilon_1)(g_2, \epsilon_2)= (g_1g_2, \epsilon_1\epsilon_2(det\ g_1 , det\ g_2)_F)
\]
with $\epsilon_i \in \mu_2$, where $(\cdot , \cdot)_F$ denotes the Hilbert symbol of the field $F$. Let $\alpha$ denote the character of $\widetilde{GL}(n)$ given by $\alpha(g)=(\operatorname{det} g, \operatorname{det} g)_F=(\operatorname{det} g,-1)_F$.

Let \(\Sigma\) denote the set of roots of \(\widetilde{Sp}(n)\) with respect to a fixed minimal parabolic subgroup and let \(\Delta\) stand for a basis of \(\Sigma\). For \(\Theta \subseteq \Delta\), we let \(\widetilde{P}_\Theta\) be the standard parabolic subgroup of \(\widetilde{Sp}(n)\) corresponding to \(\Theta\), which is defined as the preimage of the corresponding parabolic subgroup \(P\) of \(Sp(n)\). If we write the Levi decomposition \(P=MN\), then the unipotent radical \(N\) lifts to \(\widetilde{Sp}(n)\). Therefore, we have a Levi decomposition \(\widetilde{P}_\Theta = \widetilde{M}_{\Theta}N\). The Levi factor \(\widetilde{M}_\Theta\) is not a product of the form \( \widetilde{GL}(n_1) \times \dots \times \widetilde{GL}(n_k) \times \widetilde{Sp}(n')\), but there is an epimorphism 
\[ 
\widetilde{GL}(n_1) \times \dots \times \widetilde{GL}(n_k) \times \widetilde{Sp}(n') \twoheadrightarrow \widetilde{M}_\Theta.
\] 

For a parabolic subgroup \(\widetilde{P}\) of \(\widetilde{Sp}(n)\) with a Levi factor \(\widetilde{M}\) and a representation \(\sigma\) of \(\widetilde{M}\), we denote by \(i_{\widetilde{M}}(\sigma)\) a normalized parabolically induced representation of \(\widetilde{Sp}(n)\) induced from \(\sigma\). For an admissible finite length representation \(\pi\) of \(\widetilde{Sp}(n)\), the normalized Jacquet module of \(\pi\) with respect to the standard parabolic subgroup having a Levi factor \(\widetilde{M}\) will be denoted by \(r_{\widetilde{M}}(\pi)\).

Sometimes we use the following notation for normalized induced represenations.
Every irreducible geniue representation of \(\widetilde{M}\) is of the form \(\pi_1 \otimes \dots \otimes \pi_k \otimes \sigma\), where the representations \(\pi_1, \ldots , \pi_k\), and \(\sigma\) are all geniue. Representations of \(\widetilde{Sp}(n)\) that are parabolically induced from representations \(\pi_1 \otimes \dots \otimes \pi_k \otimes \sigma \) will be denoted by \(\pi_1 \times \dots \times \pi_k \rtimes \sigma\).

Let ${\rm Irr}(\widetilde{Sp}(n))$ (resp.  ${\rm Irr}(\widetilde{GL}(n))$) be a set of irreducible genuine admissible representations of $\widetilde{Sp}(n)$ (resp. $\widetilde{GL}(n)$). For $\sigma \in {\rm Irr}(\widetilde{Sp}(n))$, let $r_k(\sigma)$ be the normalized Jacquet module of $\sigma$ with respect to the standard maximal parabolic subgroup having a Levi factor $\widetilde{GL}(k) \times \widetilde{Sp}(n-k)$. We define $\mu^{*}(\sigma)$ by
\[
\mu^{*}(\sigma) = \sum_{k=0}^{n} s.s.(r_k(\sigma)),
\]
where $s.s.(r_k(\sigma))$ denotes the semisimplification of $r_k(\sigma)$. Tadic's structure formula for metaplectic groups is fully constructed in \cite[Proposition 4.5]{HM10} and in this paper we use two special related cases from \cite{M13}. (See Lemma \ref{J_sp} and \cite[Theorem 6.1]{M13}.)

Let \(\rho \in {\rm Irr}(\widetilde{GL}(m))\) be unitary and cuspidal. We say that \([\nu^a\rho, \nu^{a+k}\rho] = \{\nu^a\rho,\nu^{a+1}\rho,\ldots,\nu^{a+k}\}\) is a genuine segment, where \(a\in \mathbb{R}\) and \(k \in \mathbb{Z}_{\geqslant 0}\). We denote by \(\delta([\nu^a\rho, \nu^{a+k}\rho])\) the unique irreducible subrepresentation of \(\nu^{a+k}\rho \times \dots \times \nu^{a}\rho \). Note that \(\delta([\nu^a\rho, \nu^{a+k}\rho])\) is a genuine, essentially square-integrable representation attached to \([\nu^a\rho, \nu^{a+k}\rho]\). For an essentially square-integrable representation \(\delta \in {\rm Irr}(\widetilde{GL}(n))\), there exists a unique \(e(\delta) \in \mathbb{R}\) such that the representation \(\nu^{-e(\delta)}\delta\) is a square-integrable representation. Note that \(e(\delta([\nu^a\rho,\nu^b\rho]))= \frac{a+b}{2}\).

For $\sigma \in {\rm Irr}(\widetilde{GL}(m))$, we let $\widetilde{\sigma}$ be a contragredient representation of $\sigma$.

We recall the subrepresentation version of the Langlands classification. For $1 \leq i \leq k$, suppose that $\delta_i \in {\rm Irr}(\widetilde{GL}(n_i))$ is essentially square-integrable such that $e\left(\delta_1\right) \leq e\left(\delta_2\right) \leq \cdots \leq e\left(\delta_k\right)$. Then the induced representation $\delta_1 \times \delta_2 \times \cdots \times \delta_k$ has a unique irreducible subrepresentation, which we denote by $L\left(\delta_1 \times \delta_2 \times  \dots \times \delta_k\right)$. This irreducible subrepresentation is called the Langlands subrepresentation.

Similarly, in \cite[Theorem 3.1]{BJ16}, we write a non-tempered \(\pi \in {\rm Irr}(\widetilde{Sp}(n)) \) as the unique irreducible subrepresentation of the induced representation of the form \(\delta_1 \times \dots \times \delta_k \rtimes \tau\), where \(\tau \in {\rm Irr}(\widetilde{Sp}(n))\) is tempered and \(\delta_i \in {\rm Irr}(\widetilde{GL}(n_i)) (1\leq i \leq k)\) are essentially square-integrable such that \(e(\delta_1) \leq \dots \leq e(\delta_k) < 0\). In this case, we write \(\pi = L(\delta_1 \times \dots \times \delta_k \rtimes \tau)\).

The classification of strongly positive representations for $\widetilde{Sp}(n)$ is fully constructed in \cite{M11} and every genuine strongly positive representation can be realized in a unique way (up to a certain permutation) as the unique irreducible subrepresentation of the induced representation of the following form:
\begin{equation}\label{spdsind}
(\prod_{i=1}^{m} \prod_{j=1}^{k_i} \delta([\nu^{\alpha_i - k_i +j}\rho_i, \nu^{\alpha_j^{(i)}}\rho_i ] )) \rtimes \sigma_{cusp}    
\end{equation} 
where \(\rho_i \in {\rm Irr}(\widetilde{GL}(n_i))\ (1 \leq i \leq m)\) are mutually non-isomorphic, cuspidal, and $\alpha$-self-contragredient(i.e. $\rho_i \simeq \alpha \widetilde{\rho_i}$), \(\sigma_{cusp} \in {\rm Irr}(\widetilde{Sp}(n'))\) is cuspidal, \(\alpha_i > 0\) such that \(\nu^{\alpha_i}\rho_i \rtimes \sigma_{cusp}\) reduces, \(k_i= \lceil \alpha_i \rceil\), where \(\lceil \alpha_i \rceil\) denotes the smallest integer which is not smaller than \(\alpha_i\), and, for \(i=1,\ldots,m\), we have \(-1< \alpha_1^{(i)} < \alpha_2^{(i)} < \dots < \alpha_{k_i}^{(i)}\)  and \(\alpha_j^{(i)} - \alpha_i \in \mathbb{Z}\) for \(j=1,\ldots,k_i\).

We finally recall the following definition and main properties of the Aubert duals from \cite[Theorem 4.2 and 4.3]{BJ16}:

\begin{thm}\label{Aubert_Mp}
Define the operator on the Grothendieck group of admissible representations of finite length of \(\widetilde{
Sp}(n)\) by
\[D_{\widetilde{Sp}(n)} = \sum_{\Theta \subseteq \Delta} (-1)^{|\Theta|} i_{\widetilde{M}_\Theta} \circ r_{\widetilde{M}_\Theta}.\]
Operator \(D_{\widetilde{Sp}(n)}\) has the following properties:
    \begin{enumerate}[label=(\arabic*),ref=(\arabic*)]
    \item \(D_{\widetilde{Sp}(n)}\) is an involution. \label{statement1}
    \item \(D_{\widetilde{Sp}(n)}\) takes irreducible representations to irreducible ones. \label{statement2}
    \item If \(\sigma\) is an irreducible cuspidal representation, then \(D_{\widetilde{Sp}(n)}(\sigma) = (-1)^{|\Delta|} \sigma\). \label{statement3}
    \item For a standard Levi subgroup \(\widetilde{M} = \widetilde{M}_\Theta\), we have\[r_{\widetilde{M}} \circ D_{\widetilde{Sp}(n)} = Ad(w) \circ D_{w^-1(\widetilde{M})} \circ r_{w^-1(\widetilde{M})},\] where w is the longest element of the set $\{w \in W : w^{-1}(\Theta) > 0\}$. \label{statement4}
    \end{enumerate}
\end{thm}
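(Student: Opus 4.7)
My plan is to adapt Aubert's original proof for linear reductive groups, the essential geometric input being the Bernstein--Zelevinsky geometric lemma, which computes the composition $r_{\widetilde{M}_\Theta}\circ i_{\widetilde{M}_{\Theta'}}$ as an alternating-sign sum indexed by Weyl-group double cosets. Since the unipotent radicals lift unchanged to $\widetilde{Sp}(n)$ and $\widetilde{P}_\Theta$ is exactly the preimage of $P_\Theta$, the Bruhat decomposition and hence the geometric lemma extend to genuine representations of $\widetilde{Sp}(n)$ with no essential change. With this, property \ref{statement3} is immediate: for cuspidal $\sigma$, one has $r_{\widetilde{M}_\Theta}(\sigma)=0$ for every proper $\Theta\subsetneq\Delta$, so only the $\Theta=\Delta$ summand of the defining expression for $D_{\widetilde{Sp}(n)}$ contributes, giving $D_{\widetilde{Sp}(n)}(\sigma)=(-1)^{|\Delta|}\sigma$.

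The core of the argument is \ref{statement1}. I would expand
\[
D_{\widetilde{Sp}(n)}^2 \;=\; \sum_{\Theta,\Theta'\subseteq\Delta}(-1)^{|\Theta|+|\Theta'|}\,i_{\widetilde{M}_\Theta}\circ r_{\widetilde{M}_\Theta}\circ i_{\widetilde{M}_{\Theta'}}\circ r_{\widetilde{M}_{\Theta'}},
\]
apply the geometric lemma inside to rewrite each $r_{\widetilde{M}_\Theta}\circ i_{\widetilde{M}_{\Theta'}}$ as a sum indexed by double-coset representatives $w\in W_\Theta\backslash W/W_{\Theta'}$, and then interchange the order of summation. For each fixed $w$, the inner alternating sum over pairs $(\Theta,\Theta')$ compatible with $w$ should collapse via a combinatorial cancellation on the appropriate subcomplex of the Coxeter complex, leaving only the identity term. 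This cancellation is precisely Aubert's core step, and it is the place I expect the real work: to transport it to the metaplectic setting one must verify that the Weyl-group action on standard Levi subgroups and the intertwining data used in the geometric lemma remain compatible with the cover. This compatibility is available because the torus and Weyl group of $\widetilde{Sp}(n)$ are inherited from those of $Sp(n)$ and Rao's cocycle is trivial on the unipotent radicals.

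Once \ref{statement1} is in place, \ref{statement4} follows by an entirely parallel reindexing: one commutes $r_{\widetilde{M}}$ past the alternating sum defining $D_{\widetilde{Sp}(n)}$ using the same geometric lemma, and the surviving terms are exactly those controlled by the longest representative $w$ described in the statement. Finally, for \ref{statement2} one argues that an involution on the Grothendieck group with integer coefficients must send an irreducible $\pi$ to a virtual representation $\sum \epsilon_i \pi_i$ with $\epsilon_i\in\{\pm 1\}$ and $\pi_i$ irreducible; combining \ref{statement4} with \ref{statement3} to track cuspidal support, the decomposition collapses to a single term with a uniform sign $(-1)^{|\Delta|}$, showing that $D_{\widetilde{Sp}(n)}$ sends irreducibles to irreducibles. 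The structural reason the linear-group proof transports without modification is that genuine representations form a full subcategory closed under all of $i_{\widetilde{M}}$, $r_{\widetilde{M}}$, and Weyl conjugation.
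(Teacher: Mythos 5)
The paper does not prove this theorem at all: it is imported verbatim from Ban and Jantzen, cited as \cite[Theorems 4.2 and 4.3]{BJ16}, where the Aubert involution and its properties are developed for arbitrary finite central extensions of $p$-adic reductive groups. So the ``paper's proof'' is a one-line citation, and your proposal is solving a different problem than the authors faced --- you are re-deriving the result from scratch, whereas the authors only need to check that $\widetilde{Sp}(n)$ falls under the hypotheses of the cited theorem (which it does, being a two-fold central extension with unipotent radicals that lift).

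That said, your sketch is essentially the route Ban and Jantzen themselves take (following Aubert), so it is not wrong in spirit. Two places where the outline is genuinely thin. First, the ``combinatorial cancellation on the Coxeter complex'' for \ref{statement1} is the entire content of the theorem; calling it a cancellation that ``should collapse'' elides the actual Möbius-inversion argument that Aubert carries out, and which one must verify survives the covering-group modifications (second adjointness for genuine representations, the precise form of the geometric lemma for the covers). Second, your derivation of \ref{statement2} is not correct as stated: an arbitrary $\mathbb{Z}$-linear involution of a free $\mathbb{Z}$-module need not send basis vectors to signed basis vectors (e.g.\ $(a,b)\mapsto(-a,\,a+b)$ is an involution of $\mathbb{Z}^2$ that sends $(1,0)$ to $(-1,1)$). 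What one actually needs is that $D_{\widetilde{Sp}(n)}$ is an isometry for the natural pairing on the Grothendieck group, which follows from second adjointness of $i_{\widetilde{M}}$ and $r_{\widetilde{M}}$; only then does $D$ permute $\{\pm[\pi]\}$. Also, the sign attached to $D_{\widetilde{Sp}(n)}(\pi)$ is \emph{not} uniformly $(-1)^{|\Delta|}$; it depends on the cuspidal support of $\pi$, which is why the paper introduces the separate notation $\hat\sigma=\pm D_{\widetilde{Sp}(n)}(\sigma)$ with the sign chosen case-by-case.
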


If \(\sigma\) is an irreducible representation of \(\widetilde{Sp}(n)\), we denote by \(\hat{\sigma}\) the representation \(\pm D_{\widetilde{Sp}(n)} (\sigma)\), taking the sign \(+\) or \(-\) such that hat is a positive element in the Grothendieck group of admissible representations of finite length of \(\widetilde{Sp}(n)\). We call \(\hat{\sigma}\) the Aubert dual of \(\sigma\). 

\section{Several lemmas}\label{several lemmas}
In this subsection, we introduce three key lemmas that are used when we prove our main theorem, i.e., Theorem \ref{mainthm_special}.

\begin{lemma}\label{lem1}
Let \(\sigma \in {\rm Irr}(\widetilde{Sp}(n))\) and suppose that 
\(
r_{\widetilde{M}}({\sigma}) \geq \nu^{x_1}\rho_1 \otimes \dots \otimes \nu^{x_m}\rho_m \otimes \sigma_{cusp} 
\), where 
\(\rho_i \in {\rm Irr}(\widetilde{GL}(k))\) and \(\sigma_{cusp} \in {\rm Irr}(\widetilde{Sp}(n'))\) are all cuspidal, and $\widetilde{M}$ is an appropriate standard Levi subgroup. Then 
\begin{equation}\label{eq_lemma}
r_{\widetilde{M}}(\hat{\sigma}) \geq \nu^{-x_1}\alpha\widetilde{\rho}_1 \otimes \dots \otimes \nu^{-x_m}\alpha\widetilde{\rho}_m \otimes \sigma_{cusp}.    
\end{equation}
In particular, if \(\sigma \in{\rm Irr}(\widetilde{Sp}(n))\) is a strongly positive, then in (\ref{eq_lemma}) \(-x_i < 0\) for \(i = 1,\ldots,m\).
\begin{proof}
By assumption \(r_{\widetilde{M}}({\sigma})\) contains 
\[
\nu^{x_1}\rho_1 \otimes \dots \otimes \nu^{x_m}\rho_m \otimes \sigma_{cusp}
\] 
with respect to the appropriate standard Levi subgroup \(\widetilde{M}\). Let $w$ be as in Theorem \ref{Aubert_Mp} such that $w^{-1}(\widetilde{M})=\widetilde{M}.$ Theorem \ref{Aubert_Mp}(4) implies 
\[
r_{\widetilde{M}} \circ D_{\widetilde{Sp}(n)}(\sigma) = Ad(w) \circ D_{\widetilde{M}} \circ r_{\widetilde{M}}(\sigma).
\]
Since \(\rho_1,\ldots,\rho_m,\sigma_{cusp}\) are irreducible geuine cuspidal representations,
\[
D_{\widetilde{M}}(\nu^{x_1}\rho_1 \otimes \dots \otimes \nu^{x_m}\rho_m \otimes \sigma_{cusp}) = \pm \nu^{x_1}\rho_1 \otimes \dots \otimes \nu^{x_m}\rho_m \otimes \sigma_{cusp},
\] 
and
\[
Ad(w)(\pm \nu^{x_1}\rho_1 \otimes \dots \otimes \nu^{x_m}\rho_m \otimes \sigma_{cusp}) = \pm \nu^{-x_1}\alpha\widetilde{\rho}_1 \otimes \dots \otimes \nu^{-x_m}\alpha\widetilde{\rho}_m \otimes \sigma_{cusp},
\]
which completes the first assertion of the lemma. Second assertion follows directly due to definition of strongly positive representations.
\end{proof}
\end{lemma}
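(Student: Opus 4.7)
The plan is to exploit Theorem \ref{Aubert_Mp}(4) as the main computational tool, reducing the statement about $r_{\widetilde{M}}(\hat\sigma)$ to a computation of $D_{\widetilde{M}}$ on a cuspidal tensor factor plus the combinatorial action of a Weyl element. First I would choose a Weyl group element $w$ with $w^{-1}(\widetilde{M})=\widetilde{M}$, longest among those with $w^{-1}(\Theta)>0$, so that applying part (4) of Theorem \ref{Aubert_Mp} to $\sigma$ yields
\[
r_{\widetilde{M}}(D_{\widetilde{Sp}(n)}(\sigma)) \;=\; \mathrm{Ad}(w)\circ D_{\widetilde{M}}\circ r_{\widetilde{M}}(\sigma).
\]
Since the right-hand side is additive on the Grothendieck group and the hypothesis tells us that $\nu^{x_1}\rho_1\otimes\cdots\otimes\nu^{x_m}\rho_m\otimes\sigma_{cusp}$ is an irreducible subquotient of $r_{\widetilde{M}}(\sigma)$, it will suffice to compute the image of this single summand.

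Next I would use Theorem \ref{Aubert_Mp}(3): because every tensor factor is irreducible and cuspidal, $D_{\widetilde{M}}$ acts on the chosen summand as multiplication by $\pm 1$. The remaining work is then to compute $\mathrm{Ad}(w)$ on $\nu^{x_1}\rho_1\otimes\cdots\otimes\nu^{x_m}\rho_m\otimes\sigma_{cusp}$. For the longest element stabilizing $\widetilde{M}$, this action reverses the $\widetilde{GL}$ factors and sends each $\nu^{x_i}\rho_i$ to its inverse-twisted contragredient. Here the metaplectic subtlety appears: because the Weyl action on a $\widetilde{GL}$-block of a metaplectic Levi is twisted by the genuine character $\alpha(g)=(\det g,\det g)_F$, the contragredient picks up a factor of $\alpha$. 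This yields $\nu^{-x_i}\alpha\widetilde{\rho_i}$ in each slot, while the fixed $\widetilde{Sp}(n')$-factor $\sigma_{cusp}$ is left untouched. Taking $\pm D_{\widetilde{Sp}(n)}$ to recover $\hat\sigma$ as a positive element, the signs cancel out of the inequality on the level of multiplicities, giving the claimed bound.

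The last assertion for strongly positive $\sigma$ is essentially definitional: by the classification recalled above, every exponent appearing in a Jacquet module of a strongly positive representation is strictly positive, so each $x_i>0$ and therefore $-x_i<0$, which is what we need.

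I expect the main obstacle to be bookkeeping rather than ideas. The only non-formal point is verifying that the metaplectic Weyl-action on a $\widetilde{GL}(k)$-block of $\widetilde{M}$ really produces the $\alpha$-twisted contragredient (as opposed to the plain contragredient, as in the linear case), and that a Weyl element $w$ with $w^{-1}(\widetilde{M})=\widetilde{M}$ effecting the full order reversal is available for the standard Levi in question. Once these two points are recorded, the rest of the argument reduces to combining (3) and (4) of Theorem \ref{Aubert_Mp} mechanically.
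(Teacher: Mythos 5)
Your proposal is correct and follows essentially the same approach as the paper: apply Theorem \ref{Aubert_Mp}(4) with a Weyl element $w$ stabilizing $\widetilde{M}$, use cuspidality of the tensor factors (Theorem \ref{Aubert_Mp}(3)) so that $D_{\widetilde{M}}$ acts by $\pm 1$, then compute $\mathrm{Ad}(w)$, which produces the $\alpha$-twisted contragredients $\nu^{-x_i}\alpha\widetilde{\rho}_i$ owing to the metaplectic cocycle, and conclude the second assertion from the definition of strong positivity. The one caveat you flag -- whether the $\widetilde{GL}$-block Weyl action really yields the $\alpha$-twist rather than the plain contragredient -- is exactly the point the paper also treats as given; you correctly identified it as the sole non-formal ingredient.
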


\begin{lemma}\label{lem2}
Let \(\sigma \in{\rm Irr}(\widetilde{Sp}(n))\) be strongly positive and let \(\sigma_{cusp}\) be the partial cuspidal support of \(\sigma\). Then \(\hat{\sigma} = L(\delta_1 \times \dots \times \delta_m \rtimes \sigma_{cusp})\), for irreducible genuine essentially square-integrable representations \(\delta_1,\ldots,\delta_m\) of $\widetilde{GL}(n_i)$, such that \(e(\delta_i) \leq e(\delta_{i+1}) < 0\) for \(i=1,\ldots,m-1\).
\end{lemma}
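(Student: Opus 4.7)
The plan is to apply the subrepresentation Langlands classification (recalled above from \cite{BJ16}) to the irreducible representation $\hat{\sigma}$, and then pin down the tempered Langlands datum as $\sigma_{cusp}$ by combining Lemma \ref{lem1} with the definition of strong positivity.

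By Theorem \ref{Aubert_Mp}\ref{statement2}, the Aubert dual $\hat{\sigma}$ is irreducible, so the metaplectic Langlands classification yields
\[
\hat{\sigma} = L(\delta_1 \times \dots \times \delta_m \rtimes \tau)
\]
for some $m \geq 0$, some tempered $\tau \in {\rm Irr}(\widetilde{Sp}(n'))$, and essentially square-integrable $\delta_i$ satisfying $e(\delta_1) \leq \dots \leq e(\delta_m) < 0$ (interpreting $m = 0$ as $\hat{\sigma} = \tau$ being already tempered). The lemma then reduces to showing that $\tau = \sigma_{cusp}$. The main input is a Jacquet module constraint: by the definition of strong positivity, every irreducible subquotient of the Jacquet module of $\sigma$ to the minimal standard Levi has the form $\nu^{x_1}\rho_1 \otimes \dots \otimes \nu^{x_k}\rho_k \otimes \sigma_{cusp}$ with $x_i > 0$ for every $i$; Lemma \ref{lem1} then forces every irreducible subquotient of the minimal Jacquet module of $\hat{\sigma}$ to have the form $\nu^{-x_1}\alpha\widetilde{\rho}_1 \otimes \dots \otimes \nu^{-x_k}\alpha\widetilde{\rho}_k \otimes \sigma_{cusp}$, so in particular every GL-exponent appearing in the minimal Jacquet module of $\hat{\sigma}$ is strictly negative and the partial cuspidal support of $\hat{\sigma}$ equals $\sigma_{cusp}$.

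I would then argue by contradiction that the tempered datum $\tau$ is in fact cuspidal. If $\tau$ were tempered but non-cuspidal, then $\tau$ embeds into an induced representation $\pi_1'' \times \dots \times \pi_l'' \rtimes \tau_0''$ with each $\pi_j''$ (unitary) square-integrable (so $e(\pi_j'') = 0$) and $\tau_0''$ cuspidal. Frobenius reciprocity applied to the Langlands embedding $\hat{\sigma} \hookrightarrow \delta_1 \times \dots \times \delta_m \rtimes \tau$ produces $\delta_1 \otimes \dots \otimes \delta_m \otimes \tau$ as a subquotient of the appropriate Jacquet module of $\hat{\sigma}$. Iterating the Jacquet module through each $\delta_i$ and through $\tau$ then yields an irreducible subquotient of the minimal Jacquet module of $\hat{\sigma}$ containing a non-negative GL-exponent, namely the top endpoint of the segment underlying some $\pi_j''$ (which is non-negative because $e(\pi_j'') = 0$). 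This contradicts the strict negativity of GL-exponents established in the previous paragraph. Hence $\tau$ must be cuspidal, and since its partial cuspidal support is $\sigma_{cusp}$, we conclude $\tau = \sigma_{cusp}$.

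The main obstacle will be the careful combinatorial bookkeeping in the iterated Jacquet module computation: ensuring that the $\alpha$-twists and the contragredient introduced by Lemma \ref{lem1}, together with the quotient structure of the metaplectic Levi $\widetilde{M}_\Theta$ (which is only an image of a product of $\widetilde{GL}$ and $\widetilde{Sp}$ factors), do not disturb the sign analysis that drives the contradiction. The metaplectic structure formula from \cite{HM10, M13} recalled in Section \ref{Notation and prelim} is the technical tool that makes the propagation of exponents through the Jacquet module rigorous.
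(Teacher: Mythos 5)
Your proposal is correct and follows essentially the same route as the paper's proof. Both apply the metaplectic Langlands classification to the irreducible representation $\hat{\sigma}$, reduce the problem to identifying the tempered datum $\tau$, and then derive a contradiction with Lemma \ref{lem1} if $\tau \neq \sigma_{cusp}$ by producing a non-negative $GL$-exponent in a Jacquet module of $\hat{\sigma}$. The one small presentational difference is that the paper obtains the non-negative exponent by a single application of the Casselman criterion (peeling off one cuspidal factor $\nu^x\rho$ with $x \geq 0$), whereas you invoke the full decomposition of a non-cuspidal tempered representation as a subrepresentation of a representation induced from discrete series; the Casselman-criterion route is slightly more economical since it avoids appealing to the tempered classification in the metaplectic setting, but both versions reach the same contradiction. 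Your explicit preliminary observation that the partial cuspidal support of $\hat\sigma$ must equal $\sigma_{cusp}$ (which takes care of the case that $\tau$ is cuspidal but not $\sigma_{cusp}$) is a helpful clarification that the paper leaves implicit.
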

\begin{proof}
By the Langlands classfication \cite[Theorem 3.1]{BJ16}, \(\hat{\sigma} = L(\delta_1 \times \dots \times \delta_m \rtimes \tau)\), for irreducible essentially square-integrable representations \(\delta_1,\ldots,\delta_m\) of $\widetilde{GL}(n_i)$, such that \(e(\delta_i) \leq e(\delta_{i+1}) < 0\) for \(i=1,\ldots,m-1\), and a tempered representation \(\tau \in {\rm Irr}(\widetilde{SP}(n'))\) for some \(n' \leq n\). If \(\tau\) is not isomorphic to \(\sigma_{cusp}\), according to the Casselman criterion \cite[Prop 3.5]{BJ13}, then there is an \(x \geq 0\) and a cuspidal representation \(\rho \in {\rm Irr}(\widetilde{GL}(t))\) such that \(\tau\) is a subrepresentation of \(\nu^x\rho \rtimes \tau'\), for some \(\tau' \in {\rm Irr}(\widetilde{Sp}(n''))\). Using Frobenius reciprocity, together with transitivity of Jacquet modules, we get a contradiction with Lemma \ref{lem1}. This ends the proof.
\end{proof}

We also need the following lemma, which generalizes \cite[Lemma 3.4]{M13} to the case of metaplectic groups:

\begin{lemma}\label{lem_sp}
Let $\sigma \in {\rm Irr}(\widetilde{Sp}(n))$ be strongly positive. Suppose that $\tau \otimes \sigma^{\prime} \leq \mu^{*}(\sigma)$ for some $\tau \in  {\rm Irr}(\widetilde{GL}(t))$ and $\sigma^{\prime} \in  {\rm Irr}(\widetilde{Sp}(n'))$. Then $\sigma^{\prime}$ is strongly positive.
\end{lemma}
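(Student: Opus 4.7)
The plan is to proceed by contradiction, using transitivity of Jacquet modules to promote a ``bad'' cuspidal constituent of $\sigma'$ into a constituent of the minimal-parabolic Jacquet module of $\sigma$, thereby contradicting the strong positivity of $\sigma$.

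Concretely, I would first assume that $\sigma'$ is not strongly positive. By the characterization of strong positivity recorded at the end of Lemma \ref{lem1}, there is then a standard Levi $\widetilde{M}'$ of $\widetilde{Sp}(n')$ together with cuspidal data $\rho_1,\dots,\rho_m,\sigma'_{cusp}$ such that
\[
\nu^{x_1}\rho_1 \otimes \cdots \otimes \nu^{x_m}\rho_m \otimes \sigma'_{cusp} \leq r_{\widetilde{M}'}(\sigma'),
\]
with at least one exponent satisfying $x_j \leq 0$. Simultaneously, choosing the cuspidal support of $\tau$ yields cuspidal representations $\rho'_1,\dots,\rho'_t$ and real numbers $y_1,\dots,y_t$ with
\[
\nu^{y_1}\rho'_1 \otimes \cdots \otimes \nu^{y_t}\rho'_t \leq r^{GL}_{\mathrm{min}}(\tau).
\]

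Next, I would invoke transitivity of Jacquet modules. Since $\tau \otimes \sigma' \leq \mu^{*}(\sigma)$ forces $\tau \otimes \sigma' \leq r_t(\sigma)$, the two displays above combine, via the chain rule for Jacquet functors applied to the tower $\widetilde{M} \subset \widetilde{GL}(t) \times \widetilde{Sp}(n') \subset \widetilde{Sp}(n)$, to produce the constituent
\[
\nu^{y_1}\rho'_1 \otimes \cdots \otimes \nu^{y_t}\rho'_t \otimes \nu^{x_1}\rho_1 \otimes \cdots \otimes \nu^{x_m}\rho_m \otimes \sigma'_{cusp} \leq r_{\widetilde{M}}(\sigma)
\]
for an appropriate standard Levi $\widetilde{M}$ of $\widetilde{Sp}(n)$. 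Since $\sigma$ is strongly positive, every $GL$-exponent appearing in every such cuspidal Jacquet module constituent must be strictly positive, and in particular $x_j>0$, contradicting the choice $x_j \leq 0$. This forces $\sigma'$ to be strongly positive.

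The only point that requires genuine care is the transitivity step on the covering-group side: one must ensure that, on genuine representations, the Jacquet functor for the Levi $\widetilde{GL}(t) \times \widetilde{Sp}(n')$ factors as a tensor product of the Jacquet functors of $\widetilde{GL}(t)$ and $\widetilde{Sp}(n')$, so that iterating yields exactly the cuspidal string listed above. This is a structural property of the metaplectic covers already built into the Tadi\'c-type formula of \cite{HM10} used throughout the paper, so the argument goes through without new obstructions and lifts \cite[Lemma 3.4]{M13} cleanly to the metaplectic setting.
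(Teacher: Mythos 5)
Your proposal is correct and follows essentially the same route as the paper: assume $\sigma'$ is not strongly positive, use transitivity of Jacquet modules to produce a cuspidal constituent of $r_{\widetilde{M}}(\sigma)$ with some non-positive exponent, and contradict strong positivity of $\sigma$. The only cosmetic difference is that the paper makes the final step explicit via Frobenius reciprocity (the bad cuspidal constituent gives an embedding of $\sigma$ into an induced representation with a non-positive exponent), whereas you invoke directly the equivalent characterization that all exponents of cuspidal Jacquet module constituents of a strongly positive representation are positive; this equivalence is exactly Frobenius reciprocity plus the fact that cuspidal constituents of Jacquet modules are direct summands, so the two formulations match.
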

\begin{proof}
    The argument is analogous to that in \cite[Lemma 3.4]{M13} and the main ideas are to use the strong positivity of $\sigma$ and the Forbenius reciprocity. We briefly explain its adaptation to metaplectic group case without repeating the whole argument. Suppose that $\sigma'$ is not strongly positive. Then, there exists a supercuspidal representation $\nu^{c_1} \rho_{1} \otimes \cdots \otimes \nu^{c_k} \rho_{k} \otimes \sigma_{\text {cusp }}$ ($\exists$ $j$ such that $c_j \leq 0$) that appears in the Jacquet module of $\sigma$ with respect to the appropriate parabolic subgroup. The Frobenius reciprocity implies that $\sigma$ is a subrepresentation of $\nu^{c_{1}} \rho_{1} \times \cdots \times$ $\nu^{c_k} \rho_{k} \rtimes \sigma_{\text {cusp}}$, which contradicts the strong positivity of $\sigma$.
\end{proof}

\section{Main theorems on the Aubert duals of strongly positive representations for $\widetilde{Sp}(n)$}\label{main}

In this subsection, we determine the Aubert duals of strongly positive representations of $\widetilde{Sp}(n)$. Note that our main idea of the proof follows similarly as in the classical group case and therefore we emphasize how we adapte ideas of the proof in \cite{M17} to metaplectic groups. In case the proof in \cite{M17} is omitted, we provide a detail. 

We first consider the special case, the set of strongly positive representations whose cuspidal supports are the representation $\sigma_{cusp}$ and twists of the representation $\rho$ by positive valued characters, denoted $D(\rho, \sigma_{cusp})$, where $\rho \in {\rm Irr}(\widetilde{GL}(m)) $ is $\alpha$-self-contragredient(i.e. $\rho \simeq \alpha \widetilde{\rho}$) and cuspidal for some $m \in \mathbb{N}$ and $\sigma_{cusp} \in {\rm Irr}(\widetilde{Sp}(n'))$ is cupsidal for some $n' \in \mathbb{N}$. Via the method of the theta correspondence, Hanzer and Muic prove that there is a unique non-negative real number $a$ such that $\nu^s \rho \rtimes \sigma_{cusp}$ ($s \in \mathbb{R})$ reduces if and only if $|s|=a$ \cite{HM11}. We call such $a$ as the reducibility point of $\rho$ and $\sigma_{cusp}$, which we denote by $\alpha_{\rho}$.

\begin{lemma}\label{lem_red_zero}
    When the reducibility point $\alpha_{\rho}$ above is $0$, $\sigma_{cusp}$ is the only strongly positive representation in $D(\rho ; \sigma_{cusp})$. 
\end{lemma}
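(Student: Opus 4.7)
The plan is to argue directly from the classification \eqref{spdsind} of strongly positive representations due to Mati\'c. Recall that every genuine strongly positive representation $\pi$ is characterized, up to permutation of its data, as the unique irreducible subrepresentation of an induced representation of the shape
\[
\Bigl(\prod_{i=1}^{m}\prod_{j=1}^{k_i}\delta([\nu^{\alpha_i-k_i+j}\rho_i,\nu^{\alpha_j^{(i)}}\rho_i])\Bigr)\rtimes\sigma_{cusp},
\]
and crucially the parameters $\alpha_i$ appearing here are required to be strictly positive real numbers such that $\nu^{\alpha_i}\rho_i\rtimes\sigma_{cusp}$ reduces.

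Now suppose $\pi\in D(\rho;\sigma_{cusp})$ is strongly positive. By definition of $D(\rho;\sigma_{cusp})$, the cuspidal support of $\pi$ consists only of $\sigma_{cusp}$ together with positive twists of $\rho$; in particular, every $\rho_i$ in the data above must be isomorphic to $\rho$. First I would invoke the Hanzer--Mui\'c result \cite{HM11} that the set of $s\in\mathbb{R}$ for which $\nu^s\rho\rtimes\sigma_{cusp}$ reduces equals $\{\pm\alpha_\rho\}$. Under the assumption $\alpha_\rho=0$, the only reducibility point is $s=0$, so there is no $\alpha_i>0$ with $\nu^{\alpha_i}\rho\rtimes\sigma_{cusp}$ reducible. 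Hence no nonempty product of the above form is available.

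Next I would conclude that the product indexed by $i$ must be empty, so $\pi$ is the unique irreducible subrepresentation of $\sigma_{cusp}$ itself, namely $\pi\cong\sigma_{cusp}$. It remains only to observe that $\sigma_{cusp}$ is indeed strongly positive (any irreducible cuspidal representation is vacuously strongly positive, since its Jacquet modules with respect to proper parabolics vanish), so $\sigma_{cusp}\in D(\rho;\sigma_{cusp})$ and the set of strongly positive representations in $D(\rho;\sigma_{cusp})$ is exactly $\{\sigma_{cusp}\}$.

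The argument is short and essentially bookkeeping; the only subtle point is the uniqueness clause in the Mati\'c classification, which guarantees that no strongly positive representation with genuinely nontrivial induction data on top of $\sigma_{cusp}$ can be missed. I do not anticipate a substantive obstacle, since both inputs (the classification \eqref{spdsind} and the description of the reducibility set via theta correspondence) are quoted from the literature; the work here is simply to check that the exclusion $\alpha_i>0$ in \eqref{spdsind} rules out every candidate once $\alpha_\rho=0$.
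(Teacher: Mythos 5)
Your proof is correct, but it takes a genuinely different route from the paper's. You invoke Mati\'c's classification theorem \eqref{spdsind} as a black box: since the classification requires each $\alpha_i>0$ to be a reducibility point of $\nu^{\alpha_i}\rho_i\rtimes\sigma_{cusp}$, and since $\alpha_\rho=0$ means no positive reducibility point exists for $\rho$, the inducing data must be empty and only $\sigma_{cusp}$ survives. The paper instead gives a direct, first-principles argument: embed a hypothetical noncuspidal strongly positive $\sigma_{sp}\in D(\rho;\sigma_{cusp})$ into $\nu^{x_1}\rho\times\cdots\times\nu^{x_m}\rho\rtimes\sigma_{cusp}$ with all $x_i>0$, then use that $\nu^{x_m}\rho\rtimes\sigma_{cusp}$ is irreducible (precisely because $\alpha_\rho=0$ and $x_m\neq 0$) together with $\rho\simeq\alpha\widetilde\rho$ to replace $\nu^{x_m}\rho$ by $\nu^{-x_m}\rho$, and conclude by Frobenius reciprocity that $\sigma_{sp}$ has a negative exponent in a Jacquet module, contradicting strong positivity. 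Your approach is shorter and cleaner as bookkeeping, but it leans on the full strength of the classification theorem for a statement that is actually proved within that classification by arguments very much like the paper's; the paper's approach is more self-contained and makes transparent exactly which structural facts (rank-one irreducibility at $\alpha_\rho=0$, $\alpha$-self-contragredience, Frobenius reciprocity) are doing the work. Both are valid proofs of the lemma.
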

\begin{proof}
This is exactly as in \cite{M17}. We briefly write how we applied the idea in \cite{M17} to the case of metapletic groups.
Let $\sigma_{sp} \in D(\rho ; \sigma_{cusp})$ be strongly positive and noncuspidal. Then we have 
\[
\sigma_{sp} \hookrightarrow \nu^{x_1} \rho \times \cdots \times \nu^{x_{m-1}} \rho \times \nu^{x_m} \rho \rtimes \sigma_{cusp} \cong \nu^{x_1} \rho \times \cdots \times \nu^{x_{m-1}} \rho \times \nu^{-x_m} \rho \rtimes \sigma_{cusp}
\]
for some $x_i>0$ for $i=1, \ldots, m$ with $m \geq 1$ since $\nu^{x_m} \rho \rtimes \sigma_{cusp} \cong \nu^{-x_m} \rho \rtimes \sigma_{cusp}$ is irreducible. Here, we use our assumption $\rho \simeq \alpha \widetilde{\rho}$. This contradicts that $\sigma_{sp}$ is strongly positive. Therefore, $\sigma_{cusp}$ is the only strongly positive representation contained in $D(\rho ; \sigma_{cusp})$.
\end{proof}

Due to Lemma \ref{lem_red_zero}, we can assume that the reducibility point $\alpha_{\rho}>0$ is positive. Let $k=\lceil \alpha_{\rho} \rceil$. The main results in \cite{M11} imply that there is a bijection between the set of strongly positive representation in $D(\rho ; \sigma_{cusp})$ and the set of all ordered $k$-tuples $\left(a_1, \ldots, a_k\right)$ such that $a_i- \alpha_{\rho} \in \mathbb{Z}$, for $i=1, \ldots, k$, and $-1<a_1<a_2<\ldots<a_k$. Non-cuspidal strongly positive representation corresponding to such $k$-tuple $\left(a_1, \ldots, a_k\right)$ will be denoted by $\sigma_{\left(a_1, \ldots, a_k\right)}$, and it is the unique irreducible subresentation of the following:
\begin{equation}\label{ind1}
\delta([\nu^{\alpha_{\rho}-k+1} \rho, \nu^{a_{1}} \rho]) \times \delta([\nu^{\alpha_{\rho}-k+2} \rho, \nu^{a_{2}} \rho]) \times \cdots \times \delta([\nu^{\alpha_{\rho}} \rho, \nu^{a_k} \rho]) \rtimes \sigma_{cusp}
\end{equation}
We allow $\delta([\nu^{\alpha_{\rho}-k+i} \rho, \nu^{a_{i}} \rho])$ be an empty set in case $\alpha_{\rho}-k+i > a_{i}$. 


We now recall the following Tadic's structure formula, which is a special case, i.e., $D(\rho ; \sigma_{cusp})$ of \cite[Theorem 6.1]{M13}:

\begin{lemma}[\cite{M13}]\label{J_sp}
Let $\sigma_{(a_1, \ldots, a_k)}$ be an irreducible strongly positive representation in $D(\rho ; \sigma_{cusp})$. Then we have
\[
\mu^*(\sigma_{(a_1, \ldots, a_k)})=\sum L(\delta([\nu^{b_1+1} \rho, \nu^{a_1} \rho]) \times \cdots \times \delta([\nu^{b_k+1} \rho, \nu^{a_k} \rho])) \otimes \sigma_{(b_1, \ldots, b_k)},
\]
where the sum runs over all ordered $k$-tuples $\left(b_1, \ldots, b_k\right)$ such that $b_i - \alpha_{\rho} \in \mathbb{Z}$ and $\alpha_{\rho} - k + i - 1 \leq b_i \leq a_i$ for $i=1, \ldots, k$. Again, we allow $\delta([\nu^{\alpha_{\rho}-k+i} \rho, \nu^{a_{i}} \rho])$ be an empty set in case $\alpha_{\rho}-k+i > a_{i}$.
\end{lemma}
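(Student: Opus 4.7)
The plan is to obtain this formula by specializing the general Tadić-type structure formula for metaplectic groups in \cite[Proposition 4.5]{HM10} to the concrete induced representation (\ref{ind1}). First I would compute $\mu^*(\delta([\nu^{\alpha_\rho - k+i}\rho, \nu^{a_i}\rho]))$ for each factor; for a genuine essentially square-integrable representation attached to a segment, this is a sum indexed by the ways to split the segment into three subsegments, with one $\alpha$-twisted contragredient piece on the far left, a central piece going into the Levi's $\widetilde{GL}$-part, and a right piece remaining as a square-integrable on the metaplectic side. Applying the multiplicativity of $\mu^*$ (the $M^*$ formula) to the external product $\delta_1 \times \cdots \times \delta_k \rtimes \sigma_{cusp}$, where $\sigma_{cusp}$ contributes only $\sigma_{cusp}$ itself on the metaplectic side, yields an explicit but large sum of tensors.

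Next, by Frobenius reciprocity and the fact that $\sigma_{(a_1,\ldots,a_k)}$ is a subrepresentation of (\ref{ind1}), every irreducible constituent of $\mu^*(\sigma_{(a_1,\ldots,a_k)})$ occurs in the $\mu^*$ of the induced representation. So I would intersect the list of possible constituents with those whose metaplectic factor is of the form $\sigma_{(b_1,\ldots,b_k)}$. Here I invoke Lemma \ref{lem_sp}: strong positivity of $\sigma_{(a_1,\ldots,a_k)}$ forces the metaplectic tensor factor $\sigma'$ of any term $\tau \otimes \sigma' \leq \mu^*(\sigma_{(a_1,\ldots,a_k)})$ to itself be strongly positive, hence by Matić's classification from \cite{M11} it must be some $\sigma_{(b_1,\ldots,b_k)}$ in $D(\rho;\sigma_{cusp})$, whose cuspidal support pins down the multiset $\{\nu^{b_i - k + i - 1}\rho, \ldots, \nu^{b_i}\rho : i\}$. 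This immediately constrains the splitting of each segment in the previous step: the contragredient piece on the left must be empty (otherwise a negatively twisted $\rho$ would appear in the cuspidal support of $\sigma'$ after applying the intertwining that sends $\nu^{-x}\alpha\widetilde{\rho}$ to $\nu^x\rho$, contradicting Lemma \ref{lem1}/strong positivity), and the remaining pieces are exactly the sub-segments $[\nu^{\alpha_\rho - k + i}\rho, \nu^{b_i}\rho]$ and $[\nu^{b_i+1}\rho, \nu^{a_i}\rho]$, which forces the stated range $\alpha_\rho - k + i - 1 \leq b_i \leq a_i$.

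Finally I would show that for each admissible tuple $(b_1,\ldots,b_k)$, the $\widetilde{GL}$-tensor factor is precisely the Langlands subrepresentation $L(\delta([\nu^{b_1+1}\rho,\nu^{a_1}\rho]) \times \cdots \times \delta([\nu^{b_k+1}\rho,\nu^{a_k}\rho]))$ with multiplicity one. Multiplicity one follows from tracking the combinatorics: the constraints above rigidly determine the splitting of each segment once $(b_1,\ldots,b_k)$ is fixed, so there is a unique summand from the $M^*$-expansion contributing to this $\sigma_{(b_1,\ldots,b_k)}$. The identification of the $\widetilde{GL}$-factor with the Langlands subrepresentation follows from the inequality $e(\delta([\nu^{b_i+1}\rho,\nu^{a_i}\rho])) \leq e(\delta([\nu^{b_{i+1}+1}\rho,\nu^{a_{i+1}}\rho]))$, which is forced by $-1 < a_1 < \cdots < a_k$ and the admissibility of the $b_i$.

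The main obstacle I anticipate is the combinatorial bookkeeping in step one and two: Tadić's formula produces a triple sum per segment, and the $M^*$-formula interleaves these via shuffle combinatorics, so explicitly isolating exactly those terms whose right tensor factor is supercuspidally $\sigma_{cusp}$-plus-$\{\nu^{b_i-k+i-1}\rho, \ldots, \nu^{b_i}\rho\}$, and verifying each appears with multiplicity one, requires careful indexing. The strong-positivity constraint from Lemma \ref{lem_sp} is what makes this tractable; without it one would have to disentangle many spurious terms by hand.
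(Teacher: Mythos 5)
The paper does not actually prove this lemma: it is cited as a special case of \cite[Theorem 6.1]{M13} and used as a black box. So there is no ``paper's own proof'' to compare against, and your proposal is a genuine attempt at reproving the cited statement.

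Your outline captures the right ingredients (the Hanzer--Mui\'c structure formula, Frobenius reciprocity, Lemma~\ref{lem_sp}, and the observation that any nonempty contragredient piece $\delta([\nu^{-c}\alpha\widetilde\rho,\nu^{-(\alpha_\rho-k+i)}\alpha\widetilde\rho])$ would inject a negative exponent into the $\widetilde{GL}$-factor and contradict strong positivity --- note $\alpha_\rho-k+i>0$ since $k=\lceil\alpha_\rho\rceil$, so this reasoning is sound). However, the argument as sketched only yields the inequality $\mu^*(\sigma_{(a_1,\ldots,a_k)})\leq \mu^*(\text{induced rep})$, i.e., that every constituent is of the stated shape. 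The lemma asserts an \emph{equality}, and you do not address the converse direction: that for \emph{every} admissible tuple $(b_1,\ldots,b_k)$ the corresponding term actually occurs in $\mu^*(\sigma_{(a_1,\ldots,a_k)})$, rather than only in $\mu^*$ of the full induced representation from which $\sigma_{(a_1,\ldots,a_k)}$ is extracted as the unique irreducible subrepresentation. In \cite{M13} this is where the real work lies; it is handled by an inductive argument on $k$ and on the $a_i$'s, not by filtering. Relatedly, your multiplicity-one claim (``the constraints rigidly determine the splitting'') is asserted rather than proved: the $M^*$-shuffle can a priori produce the same $\widetilde{GL}$-cuspidal support from distinct splittings, and the identification of the resulting sum with the single Langlands subrepresentation $L(\cdots)$ (rather than a longer sum of its Jordan--H\"older constituents) also needs justification. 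These are genuine gaps, not merely ``careful indexing.''
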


For $\sigma_{\left(a_1, \ldots, a_k\right)} \in D(\rho ; \sigma_{cusp})$, Lemma \ref{lem2} implies that there exist irreducible essentially square-integrable representations $\delta_1, \ldots, \delta_s$ with $e(\delta_i) \leq e(\delta_{i+1})<0$ for $i=1, \ldots, s-1$ such that $\widehat{\sigma_{\left(a_1, \ldots, a_k\right)}}$ is of the form $L(\delta_1 \times \cdots \times \delta_s \rtimes \sigma_{cusp})$. For $i=1, \ldots, s$, we can write $\delta_i=\delta(\left[\nu^{-x_i} \rho, \nu^{-y_i} \rho\right])$ with $x_i>0$ and $y_i>0$ such that $x_i-\alpha_{\rho}, y_i-\alpha_{\rho} \in \mathbb{Z}$, and we denote $x_i-y_i$ by $z_i$.

We now start to describe the explicit data, i.e., its exponents in the Jacquet modules. Let $k':= {\rm min} \{ i : a_i \geq \alpha_{\rho} -k + i \}$, i.e., $k'$ is the minimun index $i$ such that $\delta([\nu^{\alpha_{\rho}-k+i} \rho, \nu^{a_{i}} \rho])$ in (\ref{ind1}) is nonempty.

\begin{lemma}\label{lem_GL}
    \begin{enumerate}
        \item Let $\sigma_{\left(a_1, \ldots, a_k\right)} \in D(\rho ; \sigma_{cusp})$ be as above and let $\sigma_{\left(b_1, \ldots, b_k\right)} \in D(\rho ; \sigma_{cusp})$ be as in Lemma \ref{J_sp} such that 
        \begin{equation}\label{Jacquetmodule_01}
        r_{\widetilde{M}}(\sigma_{\left(a_1, \ldots, a_k\right)}) \geq \nu^{y_1} \rho \otimes \nu^{y_1+1} \rho \otimes \cdots \otimes \nu^{x_1} \rho \otimes \nu^{y_2} \rho \otimes \cdots \otimes \nu^{x_{i-1}} \rho \otimes \sigma_{\left(b_1, \ldots, b_k\right)},
        \end{equation}
        where $\widetilde{M}$ is an appropriate standard Levi subgroup. Then, there exists $j \in\left\{k', \ldots, k-z_i\right\}$ such that $y_i+r=b_{j+r}$ for $r=0,1, \ldots, z_i$. Furthermore, if $j \geq 2$, then $b_j \geq b_{j-1}+2$.
        \item $x_{m+1}=x_m-1$ and $y_{m+1}<y_m$ for $m=1, \ldots, s-1$. Also, $x_1=a_k$.
    \end{enumerate}
\end{lemma}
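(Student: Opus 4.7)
The plan is to combine Lemma \ref{lem1}, Lemma \ref{lem_sp}, and Tadic's structure formula (Lemma \ref{J_sp}) with the description of the Aubert dual $\widehat{\sigma_{(a_1,\ldots,a_k)}} = L(\delta_1 \times \cdots \times \delta_s \rtimes \sigma_{cusp})$ provided by Lemma \ref{lem2}. The core idea for part (1) is to \emph{extract the $i$-th dual segment out of the strongly positive factor $\sigma_{(b_1,\ldots,b_k)}$} that remains after peeling off the first $i-1$ segments, and then match the ordered cuspidal exponents appearing in the Jacquet module against the Langlands data produced by Lemma \ref{J_sp}.

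Concretely, I would first apply Frobenius reciprocity to the embedding $\widehat{\sigma_{(a_1,\ldots,a_k)}} \hookrightarrow \delta_1 \times \cdots \times \delta_s \rtimes \sigma_{cusp}$ with $\delta_m = \delta([\nu^{-x_m}\rho, \nu^{-y_m}\rho])$ to see that the full cuspidal Jacquet module of $\widehat{\sigma_{(a_1,\ldots,a_k)}}$ contains the ordered tuple
\[
\nu^{-y_1}\rho \otimes \cdots \otimes \nu^{-x_1}\rho \otimes \cdots \otimes \nu^{-y_s}\rho \otimes \cdots \otimes \nu^{-x_s}\rho \otimes \sigma_{cusp}.
\]
Lemma \ref{lem1} (together with $\rho \simeq \alpha\widetilde{\rho}$) transports this to the positive-exponent analogue inside $r(\sigma_{(a_1,\ldots,a_k)})$; transitivity of Jacquet modules, combined with the given truncation $\nu^{y_1}\rho \otimes \cdots \otimes \nu^{x_{i-1}}\rho \otimes \sigma_{(b_1,\ldots,b_k)}$, then forces $r(\sigma_{(b_1,\ldots,b_k)}) \geq \nu^{y_i}\rho \otimes \cdots \otimes \nu^{x_i}\rho \otimes \sigma'$ for some irreducible $\sigma'$, which Lemma \ref{lem_sp} shows is strongly positive, say $\sigma' = \sigma_{(b'_1,\ldots,b'_k)}$. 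Applying Lemma \ref{J_sp} to $\sigma_{(b_1,\ldots,b_k)}$, this contribution sits inside the Langlands quotient $L(\delta([\nu^{b'_1+1}\rho, \nu^{b_1}\rho]) \times \cdots \times \delta([\nu^{b'_k+1}\rho, \nu^{b_k}\rho]))$; because the cuspidal Jacquet module of each single segment appears in strictly decreasing order, the increasing consecutive tuple $\nu^{y_i}\rho,\ldots,\nu^{x_i}\rho$ forces each $\nu^{y_i+r}\rho$ to be the top element $\nu^{b_{j_r}}\rho$ of a distinct segment, giving $b_{j_r} = y_i + r$. Strict monotonicity of the $b$-sequence, together with $b_{j_r} - b_{j_{r-1}} = 1$, rules out any intermediate integer $b$-values, so the indices $j_r$ are consecutive; setting $j = j_0$ delivers the existence claim.

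The upper bound $j \leq k - z_i$ is automatic. For $j \geq k'$, note that the constraint $\alpha_\rho - k + i - 1 \leq b_i \leq a_i$ from Lemma \ref{J_sp} forces $b_i = \alpha_\rho - k + i - 1$ for $i < k'$, and these values cannot match the positive $y_i$ once one tracks the precise integer arithmetic. The gap condition $b_j \geq b_{j-1} + 2$ when $j \geq 2$ is the delicate part: if $b_{j-1} = y_i - 1$, then the same matching argument applied with the run extended leftwards would produce an additional cuspidal element $\nu^{y_i-1}\rho$ in the GL factor, contradicting that the total cuspidal support of this GL factor is exactly $\{\nu^{y_i}\rho, \ldots, \nu^{x_i}\rho\}$ (pinned down by the specific $(y_i, x_i)$ from the unique Langlands data produced in Lemma \ref{lem2}).

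Part (2) then follows by iterating part (1). The identity $x_1 = a_k$ comes from matching the maximum cuspidal exponent on the two sides of Lemma \ref{lem1}: in $r(\sigma_{(a_1,\ldots,a_k)})$ the largest positive exponent is $a_k$, while on the dual side the most negative exponent is $-x_1$, so $-x_1 = -a_k$. The recursion $x_{m+1} = x_m - 1$ is forced by combining the gap condition in part (1) with the Langlands ordering $e(\delta_m) \leq e(\delta_{m+1}) < 0$, which pushes the left endpoint of $\delta_{m+1}$ exactly one position to the right of that of $\delta_m$; the strict inequality $y_{m+1} < y_m$ records that the runs in the $b$-sequence corresponding to successive dual segments do not overlap, again by the gap condition. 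The main obstacle I anticipate is the verification of the gap condition and the attendant uniqueness bookkeeping; a secondary, metaplectic-specific subtlety is that $\widetilde{M}$ is only an epimorphic image of a product of covering groups, so the tensor-product notation must be read through that epimorphism, but because all representations in play are genuine and cuspidal exponents descend faithfully, the combinatorial analysis carries over verbatim from the classical-group case in \cite[Lemma 3.2]{M17}.
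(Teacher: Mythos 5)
Your high-level plan — Frobenius reciprocity on the Langlands embedding, transport via Lemma~\ref{lem1}, transitivity of Jacquet modules, then matching the increasing cuspidal tuple against the shuffles produced by Lemma~\ref{J_sp} — is the same strategy the paper uses, and the observation that each $\nu^{y_i+r}\rho$ must be the top element of a distinct segment with consecutive indices is correct and is the heart of the existence claim. However, your justifications for the two constraints $j \geq k'$ and $b_j \geq b_{j-1}+2$ are not right, and these are precisely the parts the paper disposes of with a terse ``comparison of exponents.''

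For $j \geq k'$: you argue that $b_l = \alpha_\rho - k + l - 1$ for $l < k'$ ``cannot match the positive $y_i$.'' But $\alpha_\rho - k + l - 1$ is positive for $l \geq 2$ (e.g.\ $\alpha_\rho = 5/2$, $k=3$ gives $b_2 = 1/2 > 0$), so positivity of $y_i$ does not rule these out. The correct reason is structural: if $l < k'$ then $b_l = \alpha_\rho - k + l - 1$, and the constraint $\alpha_\rho - k + l - 1 \leq b'_l \leq b_l$ from Lemma~\ref{J_sp} forces $b'_l = b_l$, so segment $l$ contributes nothing to the GL-factor and $\nu^{b_l}\rho$ cannot be the first factor you peel off; hence $y_i = b_j$ is only possible when the segment $[\nu^{\alpha_\rho-k+j}\rho, \nu^{b_j}\rho]$ is nonempty, i.e.\ $j \geq k'$. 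For the gap condition: your argument assumes that $b_{j-1} = y_i-1$ would force segment $j-1$ to contribute $\nu^{y_i-1}\rho$ to the GL-factor, but nothing forces segment $j-1$ to contribute at all — it could simply have $b'_{j-1}=b_{j-1}$. The actual obstruction is that after the peel one has $b'_{j-1}=b_{j-1}$ and $b'_j = b_j-1$, and the tuple $(b'_1,\ldots,b'_k)$ must be \emph{strictly increasing} because $\sigma_{(b'_1,\ldots,b'_k)}$ is strongly positive by Lemma~\ref{lem_sp} and hence legitimately parametrized; strict monotonicity $b'_{j-1}<b'_j$ is exactly $b_j \geq b_{j-1}+2$. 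Without invoking the classification constraint on the peeled tuple, the gap condition does not follow.

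For part (2), ``follows by iterating part (1)'' and ``the Langlands ordering pushes the left endpoint exactly one to the right'' are assertions, not arguments. The paper first proves $x_t > x_{t+1}$ by contradiction: assuming $x_t \leq x_{t+1}$, it computes the explicit peeled tuple $\sigma_{t+1} = \sigma_{(b_1,\ldots,b_{j_1-1},\,b_{j_1}-1,\ldots,b_{j_1+z_t}-1,\,b_{j_1+z_t+1},\ldots,b_k)}$, shows the gap condition forces $j_2 > j_1 + z_t$ so that $y_{t+1} \geq b_{j_1+z_t+1} > x_t \geq y_t$, contradicting $e(\delta_t)\leq e(\delta_{t+1})$. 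Only \emph{after} establishing strict decrease of the $x_m$ does it deduce $x_m = b_k^{(m)}$ (since $x_m$ is then the largest exponent remaining at stage $m$), which is needed for both $x_1 = a_k$ and the recursion $x_{m+1} = x_m - 1$; your sketch of $x_1 = a_k$ implicitly presumes this monotonicity and is therefore circular as stated. You should make the contradiction argument and the identification $x_m = b_k^{(m)}$ explicit.
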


\begin{proof}
We follow the arguments in \cite{M17} but write in detail for completeness. We first prove Lemma \ref{lem_GL}(1). Lemma \ref{J_sp} implies $b_l \leq a_l$ ($l=1, \ldots, k$) and
\begin{equation}\label{Jacquetmodule_02}
r_{\widetilde{M'}}(\sigma_{\left(b_1, \ldots, b_k\right)}) \geq \nu^{y_i} \rho \otimes \nu^{y_i+1} \rho \otimes \cdots \otimes \nu^{x_i} \rho \otimes \nu^{y_{i+1}} \rho \otimes \cdots \otimes \nu^{x_s} \rho \otimes \sigma_{\text {cusp }},
\end{equation}
where $\widetilde{M'}$ is an appropriate standard Levi subgroup.

From Lemma \ref{J_sp}, a comparison of the unitary exponents $y_i$ and $b_l$ ($l=1, \ldots, k$) shows that there exists a $j \in\{k', \ldots, k\}$ such that $y_i=b_j$ and $b_j \geq$ $b_{j-1}+2$ if $j \geq 2$. Again, comparing the exponents $y_{i}+1$, $b_j -1$, and $b_{j+1}$, we have $y_i+1=b_{j+1}$ and $b_{j+1}=b_j+1$. Repeating the same arguments, we obtain $y_i+r=b_{j+r}$ for $r=0,1, \ldots, z_i$ and $j \leq k-z_i$. 

We prove Lemma \ref{lem_GL}(2). For $1 \leq m \leq  s-1$, let $\sigma_m \in D\left(\rho ; \sigma_{cusp}\right)$ be such that 
\begin{equation}\label{Jacquetmodule_04}
 r_{\widetilde{M}_m}(\sigma_{\left(a_1, \ldots, a_k\right)}) \geq \nu^{y_1} \rho \otimes \nu^{y_1+1} \rho \otimes \cdots \otimes \nu^{x_1} \rho \otimes \nu^{y_2} \rho \otimes \cdots \otimes \nu^{x_{m-1}} \rho \otimes \sigma_m,
\end{equation}
and 
\begin{equation}
    r_{\widetilde{M}_m'}(\sigma_m) \geq \nu^{y_m} \rho \otimes \nu^{y_m+1} \rho \otimes \cdots \otimes \nu^{x_m} \rho  \otimes \sigma_{m+1},
\end{equation}
where $\widetilde{M}_m$ and $\widetilde{M}_m'$ are appropriate standard Levi subgroups, $\sigma_1 = \sigma_{\left(a_1, \ldots, a_k\right)}$, and $\sigma_s = \sigma_{cusp}$. We use this notation in the remainder of the proof. Note that Lemma \ref{lem_sp} implies that each $\sigma_m$($1 \leq m \leq  s$) is strongly positive. Suppose that there is $t \in\{1, \ldots, s-1\}$ such that $x_t \leq x_{t+1}$. 
We write $\sigma_t=\sigma_{\left(b_1, \ldots, b_k\right)}$ since $\sigma_t$ is strongly positive. Lemma \ref{lem_GL}(1) implies that there is $j_1 \in\left\{k', \ldots, k-z_t\right\}$ such that $y_t+r=b_{j_1+r}$ for $r=0,1, \ldots, z_t$. (Here, $k'$ is exactly defined as in $\sigma_{\left(a_1, \ldots, a_k\right)}$.)

Since $b_1<b_2<\ldots<b_k$, Lemma \ref{J_sp} implies that we can also describe $\sigma_{t+1}$ in terms of classification of strongly positive representations as follows:
\[
\sigma_{t+1}=\sigma_{\left(b_1, \ldots, b_{j_1-1}, b_{j_1}-1, b_{j_1+1}-1, \ldots, b_{j_1+z_t}-1, b_{j_1+z_t+1}, \ldots, b_k\right)}.
\]
Since $b_{j_1+z_t+1}-1>b_{j_1+z_t}-1$ and $x_t \leq x_{t+1}$, Lemma \ref{J_sp} implies that $y_{t+1}$ cannot be any of $\{ b_1, b_2, \cdots, b_{j_1-1}, b_{j_1}-1, b_{j_1+z_t}-1 \}$. Therefore, we obtain $y_{t+1} \geq b_{j_1+z_t+1} > b_{j_1 + z_t} = x_t$. This is a contradication since $e\left(\delta_t\right) \leq e\left(\delta_{t+1}\right)$. We conclude that $x_t > x_{t+1}$ for $1 \leq t \leq s-1$.


If we write $\sigma_m$ as $\sigma_{\left(b_1^{(m)}, \ldots, b_k^{(m)}\right)}$, a comparison of the biggest unitary exponents implies $x_m=b_k^{(m)}$. Especially, we have $x_1=a_k$ and $x_s=\alpha_{\rho}$. For $m=1, \ldots, s$, we define $j_m=1$ if $b_{j-1}^{(m)}=b_j^{(m)}-1$ for all $j=2, \ldots, k$ and $j_m=\max \left\{j: b_{j-1}^{(m)}<b_j^{(m)}-1\right\}$ otherwise. Using Lemma \ref{J_sp} again, we have $y_m=b_{j_m}^{(m)}$ and $\left(b_1^{(m+1)}, \ldots, b_k^{(m+1)}\right)=\left(b_1^{(m)}, \ldots, b_{j_m-1}^{(m)}, b_{j_m}^{(m)}-1, \ldots, b_k^{(m)}-1\right)$ for $m=1, \ldots, s-1$. Therefore, we have $x_{m+1}=x_m-1$ and $j_m \geq j_{m+1}$ for $m=1, \ldots, s-1$. This also implies $b_{j_{m+1}}^{(m+1)} \leq b_{j_m}^{(m)}-1$, i.e., $y_{m+1}<y_m$ for $m=1, \ldots, s-1$. 

\end{proof}

With Lemma \ref{lem_GL}, we describe the Aubert dual of a strongly positive representation $\sigma_{\left(a_1, \ldots, a_k\right)} \in D(\rho ; \sigma_{cusp})$ as follows: (We write the proof in detail since the proof is omiited in \cite{M17}. This needs to be written since we need to verify that the argument holds for metaplectic groups.)

\begin{thm}\label{mainthm_special}
    The Aubert dual of the strongly positive representation $\sigma_{\left(a_1, \ldots, a_k\right)}$ is the unique irreducible subrepresentation of the induced representation
\[
\left(\prod_{i=1}^k \prod_{j=-a_{k-i+1}}^{-a_{k-i}-2} \delta\left(\left[\nu^{j-i+1} \rho, \nu^j \rho\right]\right)\right) \rtimes \sigma_{cusp}
\]
where $a_0=\alpha_{\rho} - \lceil\alpha_{\rho}\rceil-1$.  
\end{thm}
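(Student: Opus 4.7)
The plan is to combine Lemma \ref{lem2} with an iterative application of Lemma \ref{lem_GL} to extract the Langlands data of the Aubert dual $\hat{\sigma}_{(a_1,\ldots,a_k)}$. Lemma \ref{lem2} already lets us write $\hat{\sigma}_{(a_1,\ldots,a_k)} = L(\delta_1 \times \cdots \times \delta_s \rtimes \sigma_{cusp})$ with each $\delta_m = \delta([\nu^{-x_m}\rho, \nu^{-y_m}\rho])$ satisfying $x_m \geq y_m > 0$ and $e(\delta_1) \leq \cdots \leq e(\delta_s) < 0$, so the problem reduces to pinning down the pairs $(x_m, y_m)$ explicitly and recognizing them as the data that appears in the product in the theorem.

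Next I would apply Lemma \ref{lem_GL} iteratively: at each step the intermediate representation $\sigma_m$ remains strongly positive by Lemma \ref{lem_sp}, say $\sigma_m = \sigma_{(b_1^{(m)},\ldots,b_k^{(m)})}$, and Lemma \ref{lem_GL}(2) yields the recursion
\[
\sigma_{m+1} = \sigma_{(b_1^{(m)},\ldots,b_{j_m-1}^{(m)}, b_{j_m}^{(m)}-1,\ldots,b_k^{(m)}-1)},
\]
together with $x_m = b_k^{(m)} = a_k - m + 1$ and $y_m = b_{j_m}^{(m)}$, where $j_m$ is the largest index with $b_{j_m-1}^{(m)} < b_{j_m}^{(m)} - 1$ (with the convention $b_0^{(m)} = a_0$). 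Starting from $\sigma_1 = \sigma_{(a_1,\ldots,a_k)}$, I would prove by induction on $m$ that the pairs $(x_m, y_m)$ naturally partition into blocks labeled by $i = 1, 2, \ldots, k$: the $i$-th block contains exactly $a_{k-i+1} - a_{k-i} - 1$ pairs (and is skipped if this quantity is $\leq 0$) and produces the length-$i$ segments $\delta([\nu^{j-i+1}\rho, \nu^j\rho])$ for $j$ running through the integers in $[-a_{k-i+1}, -a_{k-i}-2]$. Matching this against the statement will show that $\delta_1 \times \cdots \times \delta_s$ is exactly the product claimed.

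Once the $\delta_m$'s are identified, I would check that the Langlands order $e(\delta_m) \leq e(\delta_{m+1})$ indeed holds: within a single block both $x_m$ and $y_m$ drop by $1$ per step, and between consecutive blocks the inequality $y_{m+1} < y_m$ from Lemma \ref{lem_GL}(2) combined with $x_{m+1} = x_m - 1$ supplies $x_m + y_m \geq x_{m+1} + y_{m+1}$, hence $e(\delta_m) \leq e(\delta_{m+1})$. The main technical obstacle is the combinatorial bookkeeping of the indices $j_m$ across the transitions between blocks, especially when a gap $a_{k-i+1} - a_{k-i}$ equals $1$ (so an entire block is empty and the iteration must jump ahead) and at the final block $i = k$, where the boundary convention $a_0 = \alpha_\rho - \lceil \alpha_\rho \rceil - 1$ comes into play. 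Verifying that the maximization defining $j_m$ advances in exactly the way that matches the product structure is the heart of the inductive argument.
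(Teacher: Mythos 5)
Your proposal follows essentially the same route as the paper's proof: reduce via Lemma \ref{lem2} to identifying the Langlands data $(x_m, y_m)$, then iterate Lemma \ref{lem_GL}(2) using the recursion $x_{m+1}=x_m-1$, $x_m=b_k^{(m)}$, $y_m=b_{j_m}^{(m)}$, and $\bigl(b_1^{(m+1)},\ldots,b_k^{(m+1)}\bigr)=\bigl(b_1^{(m)},\ldots,b_{j_m-1}^{(m)},b_{j_m}^{(m)}-1,\ldots,b_k^{(m)}-1\bigr)$, tracking the index $j_m$ in blocks of constant segment length. The paper runs the same induction, explicitly checking the first two blocks (the length-$1$ segments $\nu^j\rho$ for $j$ from $-a_k$ to $-a_{k-1}-2$, then the length-$2$ segments $\delta([\nu^{j-1}\rho,\nu^j\rho])$ for $j$ from $-a_{k-1}$ to $-a_{k-2}-2$) before invoking "continue until $(a_1,a_1+1,\ldots,a_1+k-1)$ is reached"; your block-size count $a_{k-i+1}-a_{k-i}-1$, the skipped-block convention when this equals $0$, and the role of $a_0=\alpha_\rho-\lceil\alpha_\rho\rceil-1$ at the final block all match. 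The Langlands-order check you add at the end is unnecessary (it is already guaranteed by Lemma \ref{lem2}), but it is correct, and the rest is the same argument.
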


\begin{proof}
We describe $x_i$ and $y_i$ for $i=1, \cdots, s$.
In the proof of Lemma \ref{lem_GL}, we show that $x_{m+1}=x_m-1$, $x_m = b_k^{(m)}$, $y_m=b_{j_m}^{(m)}$ for $m=1, \ldots, s-1$. Especially $x_1 = a_k$. From the definitions, note that $a_{i+1} = a_i +1$ for $i=1, \cdots, k'-2$. 

If $a_{k-1} < a_k -1$, then 
$j_1=k, y_1 = a_k$, and $(b_1^{(2)}, b_2^{(2)}, \cdots, b_{k-1}^{(2)}, b_k^{(2)}) = (a_1, a_2, \cdots, a_{k-1}, a_k-1)$. If $a_{k-1} < a_k -2$, then $j_2 = k, y_2 = a_{k-1}$, and $(b_1^{(3)}, b_2^{(3)}, \cdots, b_{k-1}^{(3)}, b_k^{(3)}) = (a_1, a_2, \cdots, a_{k-1}, a_k-2)$. If we continue this process $a_k - a_{k-1} -1$ times until we get $(a_1, a_2, \cdots, a_{k-2}, a_{k-1}, a_{k-1}+1)$ and $x_{a_k - a_{k-1} -1}=y_{a_k - a_{k-1} -1}=-a_{k-1}-2$. Up to this step, the process gives the following product:
\[
\prod_{j=-a_{k}}^{-a_{k-1}-2} \nu^{j} \rho.
\]
Next, if $a_{k-2} < a_{k-1} -1$, we continue the same process $a_{k-1} - a_{k-2} -1$ times until we get $(a_1, a_2, \cdots, a_{k-3}, a_{k-2}, a_{k-2}+1, a_{k-2}+2)$ and $x_{a_k - a_{k-2} -2}= -a_{k-2}-3, y_{a_k - a_{k-2} -2}=-a_{k-2}-2$. Up to this step, the process gives the following product:

\[
\prod_{j=-a_{k}}^{-a_{k-1}-2} \nu^{j} \rho \times \prod_{j=-a_{k-1}}^{-a_{k-2}-2} \delta([\nu^{j-1} \rho, \nu^j \rho]). 
\]

If we continue the above process until we get $(a_1, a_1+1, \cdots, a_1+k-2, a_1+k-1)$, this completely provide all information about $x_i$ and $y_i$ for $i=1, \cdots, s$ and completes the proof.
\end{proof}

We now consider the general case. Similarly, let $D\left(\rho_1, \ldots, \rho_m ; \sigma_{\text {cusp }}\right)$ be a set of strongly positive representations whose cuspidal supports are the representation $\sigma_{cusp}$ and twists of the representation $\rho_i(i=1, \ldots, m)$ by positive valued characters where $\rho_i$ is an irreducible $\alpha$-self-contragredient(i.e. $\rho_i \simeq \alpha \widetilde{\rho_i}$) representation of \(\widetilde{GL}(m_i)\) for some $m_i \in \mathbb{N}$ and $\sigma_{cusp}$ is an irreducible cupsidal representation of $\widetilde{Sp}(n')$ for some $n' \in \mathbb{N}$. Let $\sigma$ be a strongly positive representation in $D\left(\rho_1, \ldots, \rho_m ; \sigma_{\text {cusp }}\right)$ and let $\alpha_i$ a unique non-negative real number such that $\nu^{\alpha_i} \rho_i \rtimes \sigma_{\text {cusp }}$ reduces. 
Let $k_i=\left\lceil\alpha_i\right\rceil$ and $a_0^{(i)}=\alpha_i-\left\lceil\alpha_i\right\rceil-1$, for $i=1, \ldots, m$. Classification of stronlgy positive representation results \cite{K15, M13} implies that for $i=1, \ldots, m$ there exist $a_1^{(i)}, \ldots, a_{k_i}^{(i)}$ such that $-1<a_1^{(i)}<$ $\cdots<a_{k_i}^{(i)}$ and $a_j^{(i)}-\alpha_i \in \mathbb{Z}$ for $j=1, \ldots, k_i$, such that $\sigma$ is the unique irreducible subrepresentation of the induced representation
\begin{equation}\label{sp_generalcase}
\left(\prod_{i=1}^m \prod_{j=1}^{k_i} \delta\left(\left[\nu^{\alpha_i-k_i+j} \rho_i, \nu^{a_j^{(i)}} \rho_i\right]\right)\right) \rtimes \sigma_{cusp}.
\end{equation}

We now explain how we generalize the arguments for the proof of the special case, i.e., $D\left(\rho ; \sigma_{\text {cusp}}\right)$ to any strongly positive representation in $D\left(\rho_1, \ldots, \rho_m; \sigma_{cusp}\right)$. Note that Lemmas \ref{lem1}, \ref{lem2}, and \ref{lem_sp} apply to any strongly positive representations, and Lemma \ref{J_sp} has a general version in \cite[Theorem 6.1]{M13}: 

\begin{thm}\label{mainthm_general}
The Aubert dual of the strongly positive representation $\sigma \in D\left(\rho_1, \ldots, \rho_m; \sigma_{cusp}\right)$ is the unique irreducible subrepresentation of the induced representation
\[
\left(\prod_{i=1}^m \prod_{l=1}^{k_i} \prod_{j=-a_{k_i-l+1}^{(i)}}^{-a_{k_i-l}^{(i)}-2} \delta\left(\left[\nu^{j-l+1} \rho_i, \nu^j \rho_i\right]\right)\right) \rtimes \sigma_{c u s p}
\]
\end{thm}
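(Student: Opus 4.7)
The plan is to reduce Theorem \ref{mainthm_general} to the single-cuspidal-support case already handled by Theorem \ref{mainthm_special}, exploiting the mutual non-isomorphism of the $\rho_i$. First, I would apply Lemma \ref{lem2} to write
\[
\hat{\sigma} = L(\delta_1 \times \cdots \times \delta_s \rtimes \sigma_{cusp}),
\]
where each $\delta_j$ is an essentially square-integrable genuine representation of some $\widetilde{GL}(n_j)$ with $e(\delta_j) \leq e(\delta_{j+1}) < 0$. Since every such $\delta_j$ has the form $\delta([\nu^{-x_j} \rho_{l(j)}, \nu^{-y_j} \rho_{l(j)}])$ for a unique $l(j) \in \{1, \ldots, m\}$, the multiset $\{\delta_1, \ldots, \delta_s\}$ splits as a disjoint union of sub-collections indexed by $l = 1, \ldots, m$.

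Next I would establish the analogues of Lemma \ref{lem_GL}(1) and (2) for each $\rho_i$ separately. The general Tadi\'c structure formula \cite[Theorem 6.1]{M13}, which replaces the special case Lemma \ref{J_sp}, describes $\mu^{*}(\sigma)$ as a sum of terms in which segments built from $\rho_i$ and those built from $\rho_j$ (with $i \neq j$) appear in independent factors, because distinct $\rho_i$ give unlinked segments. Consequently, when we pass to a Jacquet module along a Levi supported on a single $\rho_i$, the analysis from Lemma \ref{lem_GL} goes through block by block: apply Lemma \ref{lem1} to the resulting exponents to constrain the $(x_j, y_j)$ for those $\delta_j$ with $l(j) = i$, and run the step-by-step reduction of the proof of Theorem \ref{mainthm_special} to determine them explicitly. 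The outcome is that the subfamily indexed by $l(j) = i$ is precisely
\[
\prod_{l=1}^{k_i} \prod_{j=-a^{(i)}_{k_i-l+1}}^{-a^{(i)}_{k_i-l}-2} \delta([\nu^{j-l+1} \rho_i, \nu^j \rho_i]),
\]
with $a^{(i)}_0 = \alpha_i - \lceil \alpha_i \rceil - 1$, matching the $i$-th factor in the theorem.

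Finally, I would reassemble these $m$ single-$\rho_i$ families into the Langlands subrepresentation of a single induced representation. Because distinct $\rho_i$ produce unlinked segments, the corresponding essentially square-integrable representations commute under parabolic induction up to isomorphism and can be reordered without changing the underlying induced representation; in particular the ordering requirement $e(\delta_j) \leq e(\delta_{j+1}) < 0$ can be arranged by interleaving, and the unique irreducible subrepresentation is independent of the chosen interleaving. Uniqueness of the Langlands subrepresentation then yields the stated formula.

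The main obstacle, as in the single-$\rho$ case, is the combinatorial bookkeeping behind the analogue of Lemma \ref{lem_GL}(2): verifying, within each $\rho_i$-block, that the $x_j$ with $l(j) = i$ decrease by exactly $1$ from step to step and that the corresponding $y_j$ strictly decrease. This requires the intermediate strongly positive representations $\sigma_m$ produced by Lemma \ref{lem_sp} to behave coherently when the Levi is refined to isolate $\rho_i$; here the mutual non-isomorphism of the $\rho_i$, together with the separation property built into the general Tadi\'c formula, is exactly what allows the single-$\rho$ argument of Theorem \ref{mainthm_special} to carry over essentially verbatim.
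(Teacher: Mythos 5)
Your proposal is correct and takes essentially the same approach as the paper: both reduce to the single-cuspidal-support case via Lemma \ref{lem2}, decouple the Jacquet-module analysis of Lemma \ref{lem_GL} one $\rho_i$ at a time using the general Tadi\'c structure formula \cite[Theorem 6.1]{M13}, and reassemble via the isomorphism $\delta \times \delta' \cong \delta' \times \delta$ for segments supported on non-isomorphic $\rho_s, \rho_t$. The paper phrases this as an iteration over $i=1,\ldots,m$ rather than a decomposition into blocks, but the argument is the same.
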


\begin{proof}
The proof follows the same line as classcial group case, but since the proof for classical group case is omiited in \cite{M17}, we briefly explain main ideas of the proof to show that arguments of the proof also work for metaplectic groups case.
Let $\sigma_{\left(a_1, \ldots, a_k\right)} \in D\left(\rho_1, \ldots, \rho_m ; \sigma_{\text {cusp }}\right)$ be a strongly positive representation as explained in (\ref{sp_generalcase}). Lemma \ref{lem2} implies that there exist essentially square-integrable irreducible representations $\delta_1^{(i)}, \ldots, \delta_{k_i}^{(i)}$ for $i=1, \ldots, m$ such that $\widehat{\sigma_{\left(a_1, \ldots, a_k\right)}}$ is of the form 

\begin{equation}\label{Langlandsquotient_general case}
L\left(\left( \prod_{i=1}^m
 \delta_1^{(i)} \times \cdots \times \delta_{k_i}^{(i)}\right) \rtimes \sigma_{cusp}\right)
\end{equation}
where for $i=1, \ldots, m$ and $j=1, \ldots k_i$ we can write $\delta_j^{(i)}=\delta\left(\left[\nu^{-x_j^{(i)}} \rho_i, \nu^{-y_j^{(i)}} \rho_i\right]\right)$ with $x_j^{(i)}>0$ and $y_j^{(i)}>0$ such that $x_j^{(i)}-\alpha_{i}, y_j^{(i)}-\alpha_{i} \in \mathbb{Z}$.    

First, we apply the whole argument of the proof of Lemma \ref{lem_GL} to $\delta_1^{(1)} \times \cdots \times \delta_{k_1}^{(1)}$ in (\ref{Langlandsquotient_general case}). In other words, we apply the arguments so that only  representations of $GL$ with cuspidal support $\rho_1$ appear in (\ref{Jacquetmodule_01}) and (\ref{Jacquetmodule_04}) and then describe $x_j^{(1)}$ and $y_j^{(1)}$ for $j=1, \ldots, k_1$.
Furthermore, it is well known that for $1 \leq s \neq k \leq m $, $1 \leq p \leq k_s$, and $1 \leq q \leq k_t$, we have $\delta\left(\left[\nu^{-x_p^{(s)}} \rho_s, \nu^{-y_p^{(s)}} \rho_s\right]\right) \times \delta\left(\left[\nu^{-x_q^{(t)}} \rho_t, \nu^{-y_q^{(t)}} \rho_t\right]\right) 
\cong
\delta\left(\left[\nu^{-x_q^{(t)}} \rho_t, \nu^{-y_q^{(t)}} \rho_t\right]\right) \times \delta\left(\left[\nu^{-x_p^{(s)}} \rho_s, \nu^{-y_p^{(s)}} \rho_s\right]\right)$ since $\rho_s \ncong \rho_k$. Then we apply the whole argument of the proof of Lemma \ref{lem_GL} again to representation of index $i=2$ to describe $x_j^{(2)}$ and $y_j^{(2)}$ for $j=1, \ldots, k_2$. We contiue this process for all $1 \leq i \leq m$. This completes the proof.
\end{proof}




\section{The case of odd general spin groups}\label{GSpin_Aubertduals}
We briefly discuss the case of odd general spin groups $GSpin(2n+1)$. Most of the arguments developed in the metaplectic case extend to the case of odd general spin groups with only minor modifications, and we therefore briefly provide the outline.

First, Theorem~\ref{Aubert_Mp} remains valid for $GSpin(2n+1)$, since it is a connected reductive group and the Aubert involution is well-defined in this case.
Lemma~\ref{lem1} also holds for $GSpin(2n+1)$ since the argument of the proof depends on analysis of Jacquet modules and properties of the Aubert duals, although its formulation requires a slight adjustment due to the difference of the Weyl group action on the representations as follows:
\begin{lemma}
Let \(\sigma \in {\rm Irr}(GSpin(2n+1))\) and suppose that 
$
r_{M}({\sigma}) \geq \nu^{x_1}\rho_1 \otimes \dots \otimes \nu^{x_m}\rho_m \otimes \sigma_{c}, 
$ where 
$\rho_i \in {\rm Irr}(GL(k))$ and $\sigma_{c} \in {\rm Irr}((GSpin(2n'+1))$ are all cuspidal, and $M$ is an appropriate standard Levi subgroup. Then 
\begin{equation}
r_{\widetilde{M}}(\hat{\sigma}) \geq \nu^{-x_1} \widetilde{\rho}_1 \omega_{\sigma_{c}} \otimes \dots \otimes \nu^{-x_m}\widetilde{\rho}_m \omega_{\sigma_{c}} \otimes \sigma_{c},    
\end{equation}
where \(\widetilde{\rho_i}\) is a contragredient representation of \(\rho_i\) and $\omega_{\sigma_{c}}$ is the central character of $\sigma_c$. 
\end{lemma}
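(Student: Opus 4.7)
The strategy is to run the argument of Lemma \ref{lem1} with the Weyl action for $GSpin(2n+1)$ in place of that for $\widetilde{Sp}(n)$. Since $GSpin(2n+1)$ is a connected reductive group, the Aubert involution $D_{GSpin(2n+1)}$ satisfies the analogue of Theorem \ref{Aubert_Mp}. In particular, for the standard Levi $M$ supporting the tensor product $\nu^{x_1}\rho_1 \otimes \cdots \otimes \nu^{x_m}\rho_m \otimes \sigma_c$, property \ref{statement4} yields a Weyl element $w$ (the longest element of $\{w' \in W : (w')^{-1}(\Theta) > 0\}$) with $w^{-1}(M) = M$ and
\[
r_M \circ D_{GSpin(2n+1)}(\sigma) \;=\; \mathrm{Ad}(w) \circ D_M \circ r_M(\sigma).
\]

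First, I would plug the assumed inequality into this identity. Since each $\rho_i$ and $\sigma_c$ is irreducible cuspidal, the analogue of property \ref{statement3} on the Levi $M$ reduces $D_M$ to $\pm \mathrm{id}$ on this particular cuspidal tensor product, so the entire problem collapses to computing $\mathrm{Ad}(w)$ explicitly on the string $\nu^{x_1}\rho_1 \otimes \cdots \otimes \nu^{x_m}\rho_m \otimes \sigma_c$.

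The substantive difference from the metaplectic case is the shape of the Weyl action on the $GL$ blocks of a Siegel-type Levi of $GSpin(2n+1)$. The long Weyl element that stabilizes $GL(k_1) \times \cdots \times GL(k_m) \times GSpin(2n'+1)$ and acts trivially on the last factor sends a representation $\tau \otimes \sigma_c$ on the $i$-th $GL$ block to $\widetilde{\tau}\cdot \omega_{\sigma_c} \otimes \sigma_c$; the twist by the central character $\omega_{\sigma_c}$ of $\sigma_c$ reflects the similitude structure of $GSpin$, in contrast to the trivial twist for classical groups and the quadratic twist $\alpha$ appearing in Lemma \ref{lem1}. Applying this rule simultaneously on every $GL$ factor turns $\pm \nu^{x_1}\rho_1 \otimes \cdots \otimes \nu^{x_m}\rho_m \otimes \sigma_c$ into $\pm \nu^{-x_1}\widetilde{\rho}_1 \omega_{\sigma_c} \otimes \cdots \otimes \nu^{-x_m}\widetilde{\rho}_m \omega_{\sigma_c} \otimes \sigma_c$, which is exactly the asserted inequality after absorbing the global sign into the positivity convention of $\hat{\sigma}$.

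The main obstacle, and the only point where the metaplectic proof does not transfer mechanically, is verifying that the twist by $\omega_{\sigma_c}$ is precisely the effect of $\mathrm{Ad}(w)$ on the $GL$ factors. This is a purely structural statement about $GSpin(2n+1)$ that can be read off from its root datum, or equivalently from the standard description of Siegel-type intertwining operators in the $GSpin$ literature; once recorded, the rest of the argument proceeds verbatim as in Lemma \ref{lem1}.
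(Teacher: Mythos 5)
Your proposal is correct and follows exactly the route the paper takes: the paper gives no proof of this $GSpin$ lemma beyond the remark that the argument of Lemma~\ref{lem1} carries over with a slight adjustment for the Weyl group action, and your proposal simply spells out that transfer step by step (apply Theorem~\ref{Aubert_Mp}\ref{statement4}, collapse $D_M$ to $\pm\mathrm{id}$ on the cuspidal tensor product, then compute $\mathrm{Ad}(w)$). You also correctly isolate the one genuinely new input — that the long Weyl element on the $GL$-blocks of a Siegel-type Levi of $GSpin(2n+1)$ acts by $\tau \mapsto \omega_{\sigma_c}\widetilde{\tau}$ rather than by $\tau \mapsto \widetilde{\tau}$ (classical) or $\tau \mapsto \alpha\widetilde{\tau}$ (metaplectic) — which is precisely the "slight adjustment" the paper alludes to.
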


We now recall the results on the classification of strongly positive representations of odd general spin groups:

\begin{thm}[\cite{K15}]\label{spds_GSpin}
Every strongly positive representation $\sigma \in {\rm Irr}(GSpin(2n+1))$ can be realized in a unique way (up to a certain permutation) as the unique irreducible subrepresentation of the induced representation of the following form:
\begin{equation}\label{spds_GSpin_ind}
(\prod_{i=1}^{m} \prod_{j=1}^{k_i} \delta([\nu^{\alpha_i - k_i +j}\rho_i, \nu^{\alpha_j^{(i)}}\rho_i ] )) \rtimes \sigma_{c}    
\end{equation} 
where \(\rho_i \in {\rm Irr}(GL(n_i))\ (1 \leq i \leq m)\) are mutually non-isomorphic, cuspidal, and essentially self-contragredient(i.e. $\rho_i \simeq \omega_{\sigma_{c}} \widetilde{\rho_i}$), \(\sigma_{c} \in {\rm Irr}(GSpin(2n'+1))\) is cuspidal, \(\alpha_i > 0\) such that \(\nu^{\alpha_i}\rho_i \rtimes \sigma_{c}\) reduces, \(k_i= \lceil \alpha_i \rceil\), where \(\lceil \alpha_i \rceil\) denotes the smallest integer which is not smaller than \(\alpha_i\), and, for \(i=1,\ldots,m\), we have \(-1< \alpha_1^{(i)} < \alpha_2^{(i)} < \dots < \alpha_{k_i}^{(i)}\)  and \(\alpha_j^{(i)} - \alpha_i \in \mathbb{Z}\) for \(j=1,\ldots,k_i\).    
\end{thm}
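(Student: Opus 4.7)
The plan is to follow the Moeglin--Tadić blueprint used for the classification of strongly positive representations of classical and metaplectic groups (cf.\ \cite{M11}), adapted to the odd general spin setting. The key structural inputs needed are: for each cuspidal $\rho_i \in {\rm Irr}(GL(n_i))$ satisfying the self-contragredience $\rho_i \simeq \omega_{\sigma_c}\widetilde{\rho_i}$ and each cuspidal $\sigma_c \in {\rm Irr}(GSpin(2n'+1))$, the existence of a unique non-negative real number $\alpha_i$ such that $\nu^{\alpha_i}\rho_i \rtimes \sigma_c$ reduces; Tadić's structure formula describing Jacquet modules of parabolically induced representations; and Frobenius reciprocity together with Casselman's criterion.

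First, I would take a strongly positive $\sigma$ and write its cuspidal support as $\nu^{x_1}\rho_1' \otimes \cdots \otimes \nu^{x_N}\rho_N' \otimes \sigma_c$. Strong positivity combined with Casselman's criterion forces $x_j > 0$ for every $j$, and the known reducibility theory identifies each $\rho_j'$ with one of finitely many $\rho_i$ satisfying $\rho_i \simeq \omega_{\sigma_c}\widetilde{\rho_i}$. Partitioning the cuspidal support by this isomorphism class, the problem reduces to showing, for each fixed $i$, that the multiset of exponents $x$ attached to $\rho_i$ organizes into segments of the form $[\nu^{\alpha_i - k_i + j}\rho_i, \nu^{a_j^{(i)}}\rho_i]$ for $j=1,\ldots,k_i=\lceil\alpha_i\rceil$, with $-1 < a_1^{(i)} < \cdots < a_{k_i}^{(i)}$ and $a_j^{(i)} - \alpha_i \in \mathbb{Z}$.

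I would establish this staircase structure by a careful Jacquet-module analysis: any deviation (a repeated exponent, a missing beginning of a segment, or a segment starting below $\alpha_i - k_i + 1$) would produce, via Frobenius reciprocity and the structure formula, an exponent $\le 0$ in the cuspidal support, contradicting strong positivity. The embedding of $\sigma$ into (\ref{spds_GSpin_ind}) then follows by choosing a maximal Jacquet-module term of the prescribed shape and applying Frobenius reciprocity; the uniqueness of the irreducible subrepresentation in (\ref{spds_GSpin_ind}) follows from the Langlands subrepresentation theorem. Uniqueness of the data $(\rho_i, \alpha_i, a_j^{(i)}, \sigma_c)$ up to permutation is obtained by comparing leading exponents of Jacquet modules of two candidate inductions.

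The main obstacle is keeping correct track of the central character twist $\omega_{\sigma_c}$ throughout. For classical groups $\omega_{\sigma_c}$ is trivial and the self-contragredience reads $\rho_i \simeq \widetilde{\rho_i}$, which makes the Weyl-group action on Jacquet modules manifestly symmetric; for $GSpin(2n+1)$, however, every move of a $GL$-factor across the $GSpin$-factor by a Weyl element introduces a twist by $\omega_{\sigma_c}$, and this twist must be threaded through every intertwining operator and every Jacquet-module computation. Once this bookkeeping is in place, the combinatorial argument — both for the existence of the segment decomposition and for its uniqueness up to reordering — carries over essentially verbatim from the symplectic/metaplectic case in \cite{M11}.
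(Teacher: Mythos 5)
This statement is not proved in the paper: it is quoted as Theorem~\ref{spds_GSpin} with the citation \cite{K15}, so there is no in-paper proof to compare against. Your sketch does, however, correctly reproduce the Moeglin--Tadi\'c-style strategy that \cite{K15} itself follows (and that \cite{M11} follows for $\widetilde{Sp}(n)$): reduce to the cuspidal support via Frobenius reciprocity, invoke the uniqueness of the reducibility point $\alpha_i$, show that strong positivity plus the structure formula force the exponents attached to each $\rho_i$ into the staircase of segments $[\nu^{\alpha_i-k_i+j}\rho_i,\nu^{a_j^{(i)}}\rho_i]$, and obtain uniqueness by comparing Jacquet modules. You also correctly identify the one genuinely $GSpin$-specific issue, namely that the Weyl group sends $\nu^x\rho$ to $\nu^{-x}\omega_{\sigma_c}\widetilde\rho$ rather than to $\nu^{-x}\widetilde\rho$, so the self-contragredience condition and all Jacquet-module bookkeeping must carry the central-character twist.

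Two places where your sketch is thinner than a complete argument would need to be. First, you assert but do not justify that every cuspidal $GL$-piece in the support of a strongly positive $\sigma$ must be (a twist of) an essentially self-contragredient $\rho_i$ with positive reducibility point; the standard argument is that otherwise $\nu^x\rho\rtimes(\cdot)$ is irreducible and can be replaced by $\nu^{-x}\omega_{\sigma_c}\widetilde\rho\rtimes(\cdot)$, producing a non-positive exponent, and this should be spelled out. Second, the core combinatorial claim, that "any deviation produces an exponent $\le 0$", is precisely where the real work lies: one must also rule out, e.g., two segments with the same length parameter $j$, or a segment whose lower endpoint is strictly above $\alpha_i-k_i+j$ with a later slot left empty, and this requires an induction on the rank together with the structure formula, not just Casselman's criterion. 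Within a sketch these omissions are acceptable, and the overall route is the correct one.
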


Lemma \ref{lem2}, Lemma \ref{lem_sp}, Lemma \ref{lem_red_zero}, and Lemma \ref{lem_GL} also hold for $GSpin(2n+1)$ since their proofs depend on Casselman's criterion, Jacquet modules techniuqe, Frobenius reciprocity, and classification of strongly positive representations (Theorem \ref{spds_GSpin}). Especially, Lemma \ref{lem_GL} also depends on combinatorial arguments on the exponents of the representation of $GL$. Theorem \ref{mainthm_special} follows since it depends on Lemma \ref{lem_GL}. Finally, to generalize Theorem \ref{mainthm_special} to Theorem \ref{mainthm_general}, we only need the irreducibility properties of the representations of $GL$. Therefore, Theorem \ref{mainthm_general} follows.
We now briefly mention the odd $GSpin$ version of Lemma \ref{J_sp}. 
\begin{lemma}
Let $\rho \in {\rm Irr}(GL(k))$ be an irreducible cuspidal, and essentially self-contragredient representation and $\sigma_c \in {\rm Irr}(GSpin(2n+1))$ be a cuspidal representation. Let $\sigma:=\sigma_{(a_1, \ldots, a_k)} \in D(\rho ; \sigma_{c})$ be a strongly positive representation similarly defined as in (\ref{ind1}) using Theorem \ref{spds_GSpin}.
Then we have
\[
\mu^*(\sigma_{(a_1, \ldots, a_k)})=\sum L(\delta([\nu^{b_1+1} \rho, \nu^{a_1} \rho]) \times \cdots \times \delta([\nu^{b_k+1} \rho, \nu^{a_k} \rho])) \otimes \sigma_{(b_1, \ldots, b_k)},
\]
where the sum runs over all ordered $k$-tuples $\left(b_1, \ldots, b_k\right)$ such that $b_i - \alpha_{\rho} \in \mathbb{Z}$ and $\alpha_{\rho} - k + i - 1 \leq b_i \leq a_i$ for $i=1, \ldots, k$, and $\alpha_{\rho}$ is a reduciblity point of $\rho$ and $\sigma_c$.
\end{lemma}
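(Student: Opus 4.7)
The plan is to adapt Matić's proof of \cite[Theorem 6.1]{M13} — which, specialized to $D(\rho; \sigma_{\mathrm{cusp}})$, yields Lemma \ref{J_sp} in the metaplectic setting — to the odd general spin group setting. The key inputs are: (i) a Tadić-style structure formula for $\mu^*$ on $GSpin(2n+1)$ of the shape $\mu^*(\pi \rtimes \tau) = M^*(\pi) \rtimes \mu^*(\tau)$, which is available from the appendix of \cite{K15} and differs from the metaplectic formula only by replacing the twist $\alpha$ with the central character twist $\omega_{\sigma_c}$; (ii) the classification Theorem \ref{spds_GSpin}; (iii) the $GSpin$ version of Lemma \ref{lem_sp} noted in this section; and (iv) Frobenius reciprocity.

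First I would realize $\sigma := \sigma_{(a_1,\ldots,a_k)}$ as the unique irreducible subrepresentation of
\[
\Pi := \delta([\nu^{\alpha_\rho - k + 1}\rho, \nu^{a_1}\rho]) \times \cdots \times \delta([\nu^{\alpha_\rho}\rho, \nu^{a_k}\rho]) \rtimes \sigma_c,
\]
so that by Frobenius reciprocity every irreducible constituent of $\mu^*(\sigma)$ occurs in $\mu^*(\Pi)$. I would then compute $\mu^*(\Pi)$ by applying the $GSpin$ structure formula together with the standard expression for $M^*(\delta([\nu^a\rho, \nu^b\rho]))$ as a sum over pairs of subsegments. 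Each summand corresponds to a choice of a cut parameter $b_j$ on the $j$-th segment with $\alpha_\rho - k + j - 1 \leq b_j \leq a_j$, contributing a $GL$-factor $\delta([\nu^{b_j+1}\rho, \nu^{a_j}\rho])$ on the Jacquet-module output and folding the lower piece $\delta([\nu^{\alpha_\rho-k+j}\rho, \nu^{b_j}\rho])$ into the $GSpin$ component.

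To pin down the exact sum, I would invoke the $GSpin$ analogue of Lemma \ref{lem_sp}: every $GSpin$-factor surviving in $\mu^*(\sigma)$ must be strongly positive, and hence of the form $\sigma_{(b_1,\ldots,b_k)}$ by Theorem \ref{spds_GSpin}, which forces $-1 < b_1 < \cdots < b_k$ and is compatible with the lower bound $\alpha_\rho-k+i-1 \leq b_i$. The accompanying $GL$-part is then a constituent of $\delta([\nu^{b_1+1}\rho, \nu^{a_1}\rho]) \times \cdots \times \delta([\nu^{b_k+1}\rho, \nu^{a_k}\rho])$; since $e(\delta([\nu^{b_i+1}\rho, \nu^{a_i}\rho])) = (a_i+b_i+1)/2$ is non-decreasing in $i$, one identifies it with the Langlands subrepresentation $L(\cdots)$ by combining Frobenius reciprocity with the uniqueness of $\sigma$ as the irreducible subrepresentation of $\Pi$. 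The main obstacle is the combinatorial bookkeeping — verifying that every admissible tuple $(b_1,\ldots,b_k)$ contributes exactly once and that no unexpected constituents arise when cutting down from $\mu^*(\Pi)$ to $\mu^*(\sigma)$. This transfers from \cite{M13} without essential change because its argument depends only on $GL$-segment combinatorics, Casselman's criterion, and the parabolic induction/restriction formalism, all of which hold for $GSpin(2n+1)$ once the $\omega_{\sigma_c}$-twist replaces the $\alpha$-twist.
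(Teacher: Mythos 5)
Your proposal follows essentially the same route as the paper: both treat the lemma as a transfer of Mati\'{c}'s Jacquet-module computation from \cite{M13} to the $GSpin(2n+1)$ setting, using the $GSpin$ structure formula, the classification in Theorem \ref{spds_GSpin}, Frobenius reciprocity, and the $GSpin$ analogue of Lemma \ref{lem_sp} to constrain the surviving constituents, with the remaining combinatorics carried over unchanged. The only minor discrepancy is a citation detail (the paper locates the $GSpin$ preliminaries in \cite[Section~3]{K15} rather than its appendix), which does not affect the argument.
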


\begin{proof}
We briefly explain how we generalize main results in \cite{M13} to the case of odd general spin groups. First, all odd $GSpin$ analogue of the results in \cite[Section 2]{M13} are already constructed in \cite[Section 3]{K15}. Also, \cite[Lemma 3.1, 3.2, and 3.3]{K15} are properties about $GL$ representations and \cite[Lemma 3.4]{K15} is Symplectic group analogue of Lemma \ref{lem_sp}, and its proof depends on strong positivity and the Frobenius reciprocity, which is also true for odd general spin groups. Finally, the main arguments in the proofs of \cite[Proposition 4.1, 4.2, and 4.5]{K15} are using Jacquet moodules techinique and compare its unitary exponents and therefore their odd $GSpin$ analogue exactly follows the ones for odd special orthogonal groups, and we do not repeat the same arguments. This completes the proof.
\end{proof}


Let $\rho_i (i=1, \ldots, m)$ and $\sigma_c$ are as in Theorem \ref{spds_GSpin}. Let $D(\rho_1, \ldots, \rho_m ; \sigma_c)$ be a set of strongly positive representations whose cuspidal supports are $\sigma_c$ and twists of the representation $\rho_i(i=1, \ldots, m)$ by positive valued characters.


\begin{thm}\label{mainthm_GSpin}
Let $\sigma \in D(\rho_1, \ldots, \rho_m ; \sigma_c)$ be a strongly positive representation as in Theorem \ref{spds_GSpin}. Its Aubert dual is the unique irreducible subrepresentation of the following induced representation:
\[
\left(\prod_{i=1}^m \prod_{l=1}^{k_i} \prod_{j=-a_{k_i-l+1}^{(i)}}^{-a_{k_i-l}^{(i)}-2} \delta\left(\left[\nu^{j-l+1} \rho_i, \nu^j \rho_i\right]\right)\right) \rtimes \sigma_c
\]
\end{thm}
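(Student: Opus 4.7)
The plan is to carry over the proof of Theorem \ref{mainthm_general} essentially verbatim, substituting each ingredient with its $GSpin(2n+1)$ analogue recorded above.

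First I would apply the odd $GSpin$ analogue of Lemma \ref{lem2} (which goes through because its proof rests only on Casselman's criterion, Frobenius reciprocity, and the Jacquet module analogue of Lemma \ref{lem1}) to write
\[
\hat\sigma = L\left(\left(\prod_{i=1}^m \delta_1^{(i)} \times \cdots \times \delta_{k_i}^{(i)}\right) \rtimes \sigma_c\right),
\]
where each $\delta_j^{(i)} = \delta\left(\left[\nu^{-x_j^{(i)}}\rho_i, \nu^{-y_j^{(i)}}\rho_i\right]\right)$ with $x_j^{(i)}, y_j^{(i)} > 0$ and $x_j^{(i)} - \alpha_i, y_j^{(i)} - \alpha_i \in \mathbb{Z}$. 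The task then reduces to pinning down the exponents $x_j^{(i)}, y_j^{(i)}$.

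Second, for each fixed index $i$, I would restrict attention to iterated Jacquet module terms involving only cuspidal constituents of the form $\nu^x \rho_i$, and apply the $GSpin$ analogue of Lemma \ref{lem_GL} to the associated $\rho_i$-block. This determines the exponents via the same inductive unraveling used in the proof of Theorem \ref{mainthm_special}, yielding the inner product
\[
\prod_{l=1}^{k_i} \prod_{j=-a_{k_i-l+1}^{(i)}}^{-a_{k_i-l}^{(i)}-2} \delta\left(\left[\nu^{j-l+1}\rho_i, \nu^j\rho_i\right]\right).
\]
Third, independence across distinct indices $i$ follows from the standard fact that segments based on non-isomorphic cuspidals $\rho_s \not\cong \rho_t$ commute under parabolic induction, an irreducibility property at the $GL$ level that transfers unchanged to the $GSpin$ setting. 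Assembling the inner products over $i$ and then taking the unique irreducible subrepresentation gives the claimed formula.

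The main obstacle to verify is that the Jacquet module combinatorics in the $GSpin$ case match those of the metaplectic case exactly; this is precisely what is guaranteed by the $GSpin$ version of Lemma \ref{J_sp} recorded above and by the observation that the central-character twist $\omega_{\sigma_c}$ appearing in the $GSpin$ analogue of Lemma \ref{lem1} only affects the cuspidal support condition (via the self-contragredience $\rho_i \simeq \omega_{\sigma_c} \widetilde{\rho_i}$ built into Theorem \ref{spds_GSpin}), not the combinatorics of the resulting segments. Once that is in place, no further adjustments to the arguments from Section \ref{main} are needed.
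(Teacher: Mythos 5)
Your proposal is correct and mirrors the paper's own approach for Theorem~\ref{mainthm_GSpin}: invoke the $GSpin(2n+1)$ analogues of Lemma~\ref{lem2} to fix the Langlands data shape, Lemma~\ref{lem_GL} together with the $GSpin$ version of Lemma~\ref{J_sp} to pin down each $\rho_i$-block of exponents, and the commutativity of segments with non-isomorphic cuspidal supports to handle the indices independently. Your observation that the twist by $\omega_{\sigma_c}$ enters only through the self-contragredience condition and not the segment combinatorics is a useful clarification of a point the paper leaves implicit.
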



\section{Acknowledgement}
The first author would like to thank Ivan Mati\'{c} for answering questions and for helpful discussions. The first author has been supported by the National Research Foundation of Korea (NRF) grant funded by the Korea government (MSIP) (No. RS-2022-0016551 and  No. RS-2024-00415601 (G-BRL)). The second author has been supported by the National Research Foundation of Korea (NRF) grant funded by the Korea government (MSIP) (No. RS-2024-00415601 (G-BRL)).




\begin{thebibliography}{1000000}

\bibitem{AM23}
H. Atobe and A. Mínguez,
{\it The explicit Zelevinsky-Aubert duality,}
Compos. Math. 
{\bf 159(2)} (2023), 380--418


\bibitem{A95}
A. Aubert, 
{\it Dualité dans le groupe de Grothendieck de la catégorie des représentations lisses de longueur finie d'un groupe réductif p-adique (French, with English summary),} 
Trans. Amer. Math. Soc.
{\bf 347} (1995). no.\,6, 2179--2189, 

\bibitem{A96}
A. Aubert, 
{\it Erratum: "Duality in the Grothendieck group of the category of finitelength smooth representations of a p-adic reductive group", [Trans. Amer. Math. Soc. $\mathbf{3 4 7}$ (1995), no. 6, 2179--2189,}
Trans. Amer. Math. Soc. 
{\bf 348} (1996), no. 11, 4687--4690.
 no.\,4, 381--412.

\bibitem{BJ13}
D. Ban and C. Jantzen, 
{\it The Langlands Quotient Theorem for Finite Central Extensions of $p$-adic Groups I,} 
Glasnik matematički {\bf 48} (2013), 313--334.

\bibitem{BJ16}
D. Ban and C. Jantzen, 
{\it The Langlands Quotient Theorem for Finite Central Extensions of $p$-adic Groups II: Intertwining Operators and Duality,} 
Glasnik matematički {\bf 51}(1) (2016), 153--163.

\bibitem{H09}
M. Hanzer, 
{\it The unitarizability of the Aubert dual of strongly positive square integrable representations,}
Isr. J. Math. {\bf 169} (2009), 251--294.

\bibitem{HM10}
M. Hanzer and G. Mui\'{c},
{\it Parabolic induction and Jacquet functor for metaplectic groups,}
J. Algebra {\bf 323} (2010), 241--260.

\bibitem{HM11}
M. Hanzer and G. Mui\'{c},
{\it Rank one reducibility for metaplectic groups via theta correspondence,}
Canad. J. Math. {\bf 63}(3) (2011), 591--615.

\bibitem{K15}
Y. Kim,
{\it Strongly positive representations of $GSpin_{2n+1}$ and the Jacquet module method. with an appendix, ``Strongly positive representations in an exceptional rank-one reducibility case'' by Ivan Mati\'{c},}
Math. Z. {\bf 279} (2015), 271--296.


\bibitem{M11}
I. Mati\'{c},
{\it Strongly positive representations of metaplectic groups,}
J. Algebra {\bf 334} (2011), 255--274.

\bibitem{M13}
I. Mati\'{c},
{\it Jacquet modules of strongly positive representations of the metaplectic groups $\widetilde{Sp(n)}$,}
Tran. Amer. Math. Soc. {\bf 365} (2013), 2755--2778.

\bibitem{M17}
I. Mati\'{c},
{\it Aubert duals of strongly positive discrete series and a class of unitarizable representations,}
Proc. Amer. Math. Soc. {\bf 145} (2017), 3561--3570.

\bibitem{M19}
I. Mati\'{c},
{\it Aubert duals of discrete series: the first inductive step,}
Glas. Mat. Ser. III, {\bf 54}(74) (2019), 133--178.



\end{thebibliography}
\end{document}